\newcommand{\wA}{\widetilde A}
\newcommand{\supp}{\operatorname{supp}}
\newcommand{\pr}{\operatorname{pr}}
\newcommand{\inj}{\operatorname{i}}
\newcommand{\comega}{\overline\Omega }
\newcommand{\ang}[1]{\langle {#1} \rangle}
\newcommand{\Op}{\operatorname{Op}}
\newcommand{\simto}{\overset\sim\rightarrow}
\newcommand{\Ami}{A_{\min}}
\newcommand{\Ama}{A_{\max}}
\begin{document}
\doi{10.1080/0003681YYxxxxxxxx}
 \issn{1563-504X}
\issnp{0003-6811}
\jvol{00} \jnum{00} \jyear{2008} \jmonth{January}

\markboth{G. Grubb}{Peturbation of Essential Spectra}


\title{{\itshape Perturbation of essential spectra of exterior 
elliptic problems}}

\author{Gerd Grubb$^{\ast}$\thanks{$^\ast$Corresponding
    author. Email: {\tt grubb\@math.ku.dk}
\vspace{6pt}} \\\vspace{6pt}  {\em{Copenhagen University Department of
  Mathematical Sciences,
  Universitetsparken 5, DK-2100 Copenhagen, Denmark
}}\\\vspace{6pt}\received{received December  2008} }

\maketitle

\begin{abstract}
For a second-order symmetric strongly elliptic differential operator on an
exterior domain in ${\mathbb
R}^n$ it is known from works of Birman and Solomiak
that a
change of the boundary condition from the Dirichlet condition to an
elliptic Neumann or Robin condition leaves the essential spectrum
unchanged, in such a way that the spectrum of the
difference between the inverses satisfies a Weyl-type asymptotic formula. 
We show that one can increase, but
not diminish, the essential spectrum by imposition of other
Neumann-type non-elliptic boundary conditions. 

The results are
extended to  $2m$-order operators, where it is shown that for any
selfadjoint realization defined by an elliptic normal boundary
condition (other than the Dirichlet condition), one can augment the 
essential spectrum at will by adding a suitable
operator to the mapping from free Dirichlet data to Neumann data.
We here also show an extension of the
spectral asymptotics formula for the difference between inverses of
elliptic problems. 

The proofs rely on Kre\u\i{}n-type formulas for
differences between inverses, and cutoff techniques, combined with 
results on singular Green operators and their spectral asymptotics.

\medskip

{\it Dedicated to Vsevolod A.\ Solonnikov on the occasion of
his 75 years birthday.}
\bigskip

\begin{keywords}Exterior domain, essential spectrum, singular Green
operator, Schatten class, Krein formula, spectrally negligible cutoffs
\end{keywords}
\begin{classcode}35J40, 35P20, 35S15, 47A10 \end{classcode}

\bigskip

\end{abstract}

\section{Introduction}\label{Introduction}

Let $A$ be  a uniformly strongly elliptic differential
operator on ${\mathbb R}^n$ ($n\ge 2$)
\begin{equation}
A=-\sum_{j,k=1}^n\partial_ja_{jk}(x)\partial_k+a_0(x),\label{tag1.1}
\end{equation} 
with real bounded smooth coefficients with bounded derivatives,
satisfying $a_{jk}=a_{kj}$ and
\begin{equation}
 \sum_{j,k}a_{jk}(x)\xi _j\xi _k\ge c_1|\xi |^2,\; a_0(x)\ge c_2,\text{
for }x,\xi \in{\mathbb R}^n,\label{tag1.2}
\end{equation}
with $c_1,c_2>0$. We denote by $A_0$ 
the maximal realization in $L_2({\mathbb R}^n)$; it is selfadjoint
positive. Let $\Omega _+\subset{\mathbb R}^n$ be the exterior of a bounded smooth open
set $\Omega _-$, with boundary denoted $\Sigma $ ($=\partial\Omega _+=\partial\Omega _-$),
and let
$A_1$, $A_2$ and $A_3$ be the selfadjoint lower bounded realizations
in $L_2(\Omega _+)$
determined by
the Dirichlet condition ($\gamma _0u\equiv u|_{\Sigma  }=0$), the oblique Neumann
condition ($\nu _{A}u=0$, see \eqref{tag2.4} below),  resp.\  a Robin 
condition ($\nu _{A}u=b(x)\gamma _0u$) with $b$ real and smooth. The coefficient $a_0$ is assumed to be taken so large positive that all four operators 
have positive lower bound. 

It is known that the operators $A_j$ have an
unbounded essential spectrum, consisting of an interval $[c,\infty [\,$ if the
coefficients converge to a limit for $|x|\to\infty $, and more
generally being a 
subset of $[c,\infty [\,$ with possible gaps (e.g.\ when
the coefficients are periodic).

Birman showed in \cite{B62} a general principle concerning the
stability of the essential spectrum:
\begin{align}
A_0^{-1}&-A_j^{-1}\oplus 0_{L_2(\Omega _-)}\in T_{2/n},\label{tag1.3} \\
A_j^{-1}&-A_k^{-1}\in T_{2/(n-1)}, \text{ for }j,k=1,2,3;
\label{tag1.4}
\end{align}
where $T_\alpha $ denotes the class of compact operators whose
characteristic values $s _l$ are $O(l^{-\alpha })$ for $l\to\infty
$. (When $\Omega _1\cup\Omega _2$ is a disjoint union of open sets,
and $P_i$ acts in $L_2(\Omega _i)$, we denote by $P_1\oplus P_2$ the
operator in $L_2(\Omega _1\cup\Omega _2)$ that acts like $P_i$ on
$L_2(\Omega _i)$, naturally injected in $L_2(\Omega _1\cup\Omega _2)$.) In
particular, all four operators have the same essential spectrum $\sigma _{\operatorname{ess}}(A_0)$;
this extends a result of Povzner, as referred to in \cite{B62}.
(Birman's paper also allowed unbounded coefficients and limited smoothness, but we shall not
follow up on those aspects here.)

The result \eqref{tag1.4} was refined by Birman and Solomiak in \cite{BS80}, where a
Weyl-type spectral asymptotics formula was obtained ($s_l l^{2/(n-1)}
$ converges
to a limit for $l\to\infty $). 
In Grubb \cite{G84} similar spectral asymptotics formulas were shown
by methods of pseudodifferential boundary
problems, 
and refinements with a spectral resolvent parameter were 
studied in \cite{G84a}. 
Spectral estimates of resolvent differences have been taken up again
in recent works of Alpay and Behrndt \cite{AB09},  
Gesztesy and Malamud \cite{GM08}.

The present paper extends the results to higher-order operators, but
aims in particular for a slightly different question, 
namely of how much one
can perturb the essential spectrum of $A_3$ by replacing the Robin
condition by a more
general {\it Neumann-type} boundary condition (not
necessarily elliptic)
\begin{equation}
\nu _{A}u=C\gamma _0u. \label{tag1.5}
\end{equation}
Let $\wA$ denote the realization of $A$ on $\Omega _+$ determined by
\eqref{tag1.5}, i.e., with domain
\begin{equation}
D(\wA)=\{u\in L_2(\Omega _+)\mid Au\in L_2(\Omega _+),\,\nu _{A}u=C\gamma _0u \}.
\label{tag1.6}\end{equation}
The outcome is as follows:

1) For any nonzero $a\in {\mathbb R}\setminus \sigma _{\operatorname{ess}}(A_0)$, $C$ can
be chosen as a pseudodifferential operator of order $1$ such that
$\wA$ is selfadjoint with \begin{equation}
\sigma _{\operatorname{ess}}(\wA)=\sigma
_{\operatorname{ess}}(A_0)\cup \{a\}.\label{tag1.7}
\end{equation} More generally, when $T_0$ is an invertible selfadjoint
operator in a separable infinite dimensional Hilbert space $Z_0$, one 
can choose an operator $C$ such 
that $\wA$ is selfadjoint and
\begin{equation}\sigma _{\operatorname{ess}}(\wA)=\sigma _{\operatorname{ess}}(A_0)\cup\sigma _{\operatorname{ess}}(T_0).\label{tag1.8}\end{equation}

2) For any choice of an operator $C$ in \eqref{tag1.5} defining a selfadjoint invertible
realization $\wA$, $\sigma _{\operatorname{ess}}(A_0)$ remains in the essential spectrum of $\wA$.

We also reprove the spectral asymptotics formulas, and extend the results
to strongly elliptic operators of order $2m$ for positive integer $m$. 

The question of whether points of $\sigma _{ \operatorname{ess}}(A_0)$
can be removed by a perturbation of the boundary condition was brought
up in a conversation with M.\ Marletta, M.\ Brown and I.\ Wood in
Cardiff in May 2008; the author thanks these colleagues for useful discussions. 

\bigskip

\section {Description of the operators in the second-order case}\label{Description}

Let us first recall some well-known facts. The Sobolev space
$H^s({\mathbb R}^n)$ ($s\in {\mathbb R}$) can be provided with the norm
$\|u\|_s
=\|{\cal F}^{-1}(\ang\xi ^s{\cal
F}u)\|_{L_2({\mathbb R}^n)}$; here ${\cal F}$ is the Fourier transform and
$\ang\xi =(1+|\xi |^2)^{\frac12}$. There is a standard construction
from this of
Sobolev spaces over an open subset and over the
boundary manifold. We denote by $\Ama$ resp.\ $\Ami$ the
operators acting like $A$ with domains
\begin{equation*}
D(\Ama)=\{u\in L_2(\Omega _+)\mid Au\in L_2(\Omega _+)\}, \quad D(\Ami)=H^2_0(\Omega _+);
\end{equation*}
here $\Ami$ is closed symmetric, and $\Ama=\Ami^*$.  The operators $\wA$ satisfying $\Ami\subset \wA\subset \Ama$ are
called the realizations of $A$.

The symmetric sesquilinear forms
\begin{align}
s_{{\mathbb R}^n}(u,v)&=\int_{{\mathbb
R}^n}\sum_{j,k=1}^n(a_{jk}\partial_ku\partial_j\overline{v}+a_0u\overline
v)\,dx,\nonumber\\
 s(u,v)&=\int_{\Omega _+}\sum_{j,k=1}^n(a_{jk}\partial_ku\partial_j\overline{v}+a_0u\overline
v)\,dx, \label{tag2.1}
\end{align}
are bounded on $H^1({\mathbb R}^n)$ resp.\ $H^1(\Omega _+)$ and satisfy
\begin{equation}
s_{{\mathbb R}^n}(u,u)\ge c\|u\|_{H^1({\mathbb R}^n)}^2\text{ resp. }s(u,u)\ge
c\|u\|_{H^1(\Omega _+)}^2\label{tag2.2}
\end{equation}
there, with $c=\min\{c_1,c_2\}$. Moreover,
\begin{equation}
(Au,v)_{L_2(\Omega _+)}=s(u,v)+(\nu _Au,\gamma _0v)_{L_2(\Sigma )}, \quad u\in
H^2(\Omega _+), v\in H^1(\Omega _+),\label{tag2.3}
\end{equation}
where 
\begin{equation}
\nu _{A}u=\sum_{j,k}a_{jk}\nu _j\gamma _0\partial_k u,\label{tag2.4}
\end{equation}
with $(\nu _1(x),\dots,\nu _n(x))$ denoting the interior unit normal to
$\Omega _+$ at $x\in \Sigma $.
Hence the standard variational
construction (the Lax-Milgram lemma) applied to the triples \linebreak$(L_2({\mathbb R}^n),H^1({\mathbb
R}^n),
s_{{\mathbb R}^n}) $,  $(L_2(\Omega _+),H^1_0(\Omega _+),s) $,
resp.\ $(L_2(\Omega _+),H^1(\Omega _+),s) $ 
defines the positive selfadjoint operators $A_0$ in $L_2({\mathbb R}^n )$, $A_1$
and $A_2$ in $L_2(\Omega _+)$ mentioned in the introduction. (The
variational construction is known e.g.\ from Lions and Magenes
\cite{LM68}, and is also explained in Grubb \cite{G09}.) In view
of elliptic
regularity theory and the uniform symbol estimates, the
domains are in fact contained in $H^{2}$. Moreover, the operators
representing the 
nonhomogeneous boundary value problems (cf.\ e.g.\ \cite{LM68})
\begin{equation}
{\cal A}_1=\begin{pmatrix} A\\\gamma _0\end{pmatrix} : 
H^{s+2}(\Omega _+) \to
\begin{matrix}
H^{s}(\Omega _+)\\ \times \\ H^{s+\frac32}(\Sigma  )\end{matrix},
\quad
{\cal A}_2=\begin{pmatrix} A\\\nu  _A\end{pmatrix} : 
H^{s+2}(\Omega _+) \to
\begin{matrix}
H^{s}(\Omega _+)\\ \times \\ H^{s+\frac12}(\Sigma  )\end{matrix},
\label{tag2.5}
\end{equation}
where $s>-\frac32$ resp.\ $s>-\frac12$,
have solution operators, continuous in the opposite direction:
\begin{equation}
{\cal A}_1^{-1}=\begin{pmatrix} R_1&\; K_1\end{pmatrix},\quad 
{\cal A}_2^{-1}=\begin{pmatrix} R_2& \; K_2\end{pmatrix}.\label{tag2.6}
\end{equation}
In modern terminology,
\begin{equation}
R_1=Q_+-K_1\gamma _0Q_+,\quad R_2=Q_+-K_2\nu _AQ_+, \label{tag2.7}
\end{equation}
where $Q$ is the {\it pseudodifferential operator} $Q=A_0^{-1}$ on ${\mathbb
R}^n$ and $Q_\pm= r^\pm Qe^\pm $ is its truncation  
 to $\Omega _\pm$ (here $e^\pm $ extends to ${\mathbb R}^n$ by 0 on 
$\Omega _\mp$, $r^\pm $
restricts from ${\mathbb R}^n$ to $\Omega _\pm$). The operators  $K_1$ and $K_2$
are {\it Poisson operators} solving the respective boundary value problems
with nonzero boundary data, zero data in the interior of $\Omega _+$;
their mapping properties extend to the full scale of Sobolev spaces
with $s\in{\mathbb R}$.
$R_1$ and $R_2$ act in $L_2(\Omega _+)$ as the inverses of the
realizations 
$A_1$ resp.\ $A_2$ of $A$ with domains
\begin{equation}
D(A_1)=\{u\in H^2(\Omega _+)\mid \gamma _0u=0 \},\text{ resp.\ }
D(A_2)=\{u\in H^2(\Omega _+)\mid \nu _{A}u=0 \}.\label{tag2.8} 
\end{equation}
 
The operator $A_3$ representing the Robin condition $\nu
_Au=b\gamma _0u$ is defined similarly from the sesquilinear form 
\begin{equation}
s_b(u,v)=s(u,v)+(b\gamma _0u,\gamma _0v)_{L_2(\Sigma )}\label{tag2.9}
\end{equation}
on $H^1(\Omega _+)$ and has similar properties as $A_2$: its domain is
$D(A_3)=\{u\in H^2(\Omega _+)\mid (\nu _A-b\gamma _0)u=0 \}$, and the operator
\begin{equation}
{\cal A}_3=\begin{pmatrix} A\\\nu _A-b\gamma _0\end{pmatrix} \text{ has
  inverse }\begin{pmatrix} R_3&\; K_3\end{pmatrix}, \text{ with }R_3=Q_+-K_3(\nu _A-b\gamma _0)Q_+.\label{tag2.7a}
\end{equation}

The above facts have been known for many years, although the emphasis was not
always placed on including low values of $s$. Instead of accounting
for this aspect in detail here, we mention that the results are covered 
by the construction in the book Grubb
\cite{G96}, Chapter 3, and that the general $2m$-order case will be
treated below in Section \ref{Higher}.

\medskip

We shall now regard the realization defined by \eqref{tag1.5} from the point of
view of general nonlocal boundary value problems. The basic theory was
presented in Grubb  \cite{G68} and was taken up again and further
developed in a joint work with Brown and Wood \cite{BGW09};
applications to
exterior domains are included in \cite{G08}. (An introduction is also
given in \cite{G09}.) The fundamental result is
that the closed
realizations $\wA$ are in a 1--1 correspondence with the closed,
densely defined operators $T:V\to W$, where $V$ and $W$ are closed
subspaces of $Z$, the nullspace of $A_{\max}$. Many properties are
carried along in this correspondence, for example, $\wA$ is invertible
if and only if $T$ is so, and in the affirmative case one has 
the  Kre\u\i{}n-type formula
\begin{equation}
\wA^{-1}=A_1^{-1}+\inj_VT^{-1}\pr_W,\label{tag2.10}
\end{equation}
where $\inj_V$ denotes the injection $V\hookrightarrow H$ and 
$\pr_V$ denotes the orthogonal projection onto $V$, in $H=L_2(\Omega _+)$.
We have here taken the Dirichlet realization $A_1$ as the reference
operator for the correspondence theorem.

Consider in particular a realization $\wA$ corresponding to an
operator $T:Z\to Z$ (i.e., with $V=W=Z$).

As shown in the mentioned references, $\wA$ can be interpreted as 
representing a boundary
condition.
To describe that boundary condition, we first recall that \eqref{tag2.3} implies the Green's formula valid for
$u,v\in H^2(\Omega _+)$,
\begin{equation}
(Au,v)_{L_2(\Omega _+)}-(u,Av)_{L_2(\Omega _+)}=(\nu  _Au,\gamma _0v)_{L_2(\Sigma )}-
(\gamma _0u,\nu _Av)_{L_2(\Sigma )}.
\end{equation}
We denote by $\gamma _Z$  the restriction of $\gamma _0$ to $Z$,
\begin{equation}
\gamma _Z:Z\simto H^{-\frac12}(\Sigma ),\label{tag2.11}
\end{equation}
with adjoint $\gamma _Z^*:H^{\frac12}(\Sigma )\simto Z$ (recall that for $s\in{\mathbb
  R}$, $H^{-s}(\Sigma )$ identifies with the antidual (conjugate dual)
space $(H^s(\Sigma ))^*$ of $H^s(\Sigma )$, with a duality consistent
with the scalar product in $L_2(\Sigma )$). Moreover, we set
\begin{equation} P_{\gamma _0,\nu _{A}}=\nu _{A}K_1,\quad
\Gamma =\nu _{A}-P_{\gamma _0,\nu _{A}}\gamma _0,\text{ also equal to
}\nu _AA_1^{-1}\Ama;\label{tag2.12} 
\end{equation}
here $P_{\gamma _0,\nu _{A}}$ is a first-order elliptic pseudodifferential
operator  over $\Sigma $, and $\Gamma $ is a (nonlocal) trace operator.
There holds a generalized Green's formula for all $u,v\in D(A_{\max})$,
\begin{equation}
(Au,v)_{L_2(\Omega _+)}-(u,Av)_{L_2(\Omega _+)}=(\Gamma u,\gamma _0v)_{\frac12,-\frac12}-
(\gamma _0u,\Gamma v)_{-\frac12, \frac12},\label{tag2.13}
\end{equation}
 where $(\cdot,\cdot)_{s ,-s }$ indicates the (sesquilinear) duality pairing
between $H^s (\Sigma )$ and $H^{-s }(\Sigma
)$. 
The boundary condition that $\wA$ represents is then found to be
\begin{equation}
\Gamma u=L\gamma _0u,\label{tag2.14}
\end{equation}
 where $L$ is the closed, densely defined operator from
$H^{-\frac12}(\Sigma )$ to $H^\frac12(\Sigma )$ defined from $T$ by
\begin{equation}
L=(\gamma _Z^*)^{-1}T\gamma _Z^{-1}, \quad D(L)=\gamma _0D(T).\label{tag2.15}
\end{equation}
Since $\Gamma =\nu _{A}-P_{\gamma _0,\nu _{A}}\gamma _0$, the condition \eqref{tag2.14} can also be written
\begin{equation}
\nu _{A}u= (L+P_{\gamma _0,\nu _{A}})\gamma _0u,\label{tag2.16}
\end{equation}
so it is of the  form \eqref{tag1.5} with 
$C$ acting like $L+P_{\gamma _0,\nu _{A}}$.
To sum up:

\begin{proposition}\label{Proposition2.1} 
When $\wA$ corresponds to $T:Z\to Z$, it equals the
realization defined by the Neumann-type boundary condition {\rm \eqref{tag1.5}}, where
\begin{align}
C=L+P_{\gamma _0,\nu _{A}},& \quad L=(\gamma _Z^*)^{-1}T\gamma
_Z^{-1},\nonumber\\
D(C)=D(L)&=\gamma _0D(T).
\label{tag2.17}
\end{align}
\end{proposition}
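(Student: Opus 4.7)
My plan is to derive the boundary condition satisfied by $D(\wA)$ directly from the Kre\u\i{}n-type resolvent formula \eqref{tag2.10} and the generalized Green's formula \eqref{tag2.13}, and then use the decomposition $\Gamma =\nu _A-P_{\gamma _0,\nu _A}\gamma _0$ from \eqref{tag2.12} to recast the result in the Neumann-type form \eqref{tag1.5}.

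Take $u\in D(\wA)$ and set $f=\Ama u$. By \eqref{tag2.10} (with $V=W=Z$),
\begin{equation*}
u = A_1^{-1}f + T^{-1}\pr_Z f, \qquad T^{-1}\pr_Z f\in D(T)\subset Z.
\end{equation*}
Applying $\gamma _0$ annihilates the first summand, so $\gamma _0u=\gamma _Z(T^{-1}\pr_Z f)$, which in particular lies in $\gamma _Z D(T)=D(L)$, and $T^{-1}\pr_Z f=\gamma _Z^{-1}\gamma _0u$. Since $\Ama$ vanishes on $Z$, we have $\Ama u=f$, and by the last expression for $\Gamma $ in \eqref{tag2.12} this gives $\Gamma u=\nu _A A_1^{-1}f$.

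The decisive step is the identity $\gamma _Z^*\Gamma u=\pr_Z f$. To verify it, test against an arbitrary $z\in Z$: applying \eqref{tag2.13} with first argument $A_1^{-1}f$ (whose Dirichlet trace vanishes, so that $\Gamma (A_1^{-1}f)=\nu _A(A_1^{-1}f)$) and second argument $z$ (for which $Az=0$ and $\Gamma z=0$) gives
\begin{equation*}
(f,z)_{L_2(\Omega _+)}=(\nu _AA_1^{-1}f,\gamma _Zz)_{\frac12,-\frac12}=(\gamma _Z^*\Gamma u,z)_{L_2(\Omega _+)},
\end{equation*}
and the left-hand side equals $(\pr_Z f,z)_{L_2}$ since $z\in Z$. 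Therefore
\begin{equation*}
\Gamma u=(\gamma _Z^*)^{-1}\pr_Z f=(\gamma _Z^*)^{-1}T\gamma _Z^{-1}\gamma _0u=L\gamma _0u,
\end{equation*}
which is the boundary condition \eqref{tag2.14}; rewriting via $\nu _A=\Gamma +P_{\gamma _0,\nu _A}\gamma _0$ yields \eqref{tag2.16}, so $\wA$ is contained in the realization defined by \eqref{tag1.5} with $C=L+P_{\gamma _0,\nu _A}$ and $D(C)=D(L)=\gamma _0D(T)$. Equality of the two realizations then follows from the 1--1 correspondence between closed realizations and operators $T:Z\to Z$ (both realizations induce the same $T$).

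The main technical point to monitor is the correct interpretation of $\gamma _Z^*$ and of the trace $\Gamma $ on the full space $D(\Ama)$: one must ensure that $\Gamma u\in H^{\frac12}(\Sigma )$ so that $(\gamma _Z^*)^{-1}\Gamma u$ is a well-defined element of $Z$, and that the Green pairing in \eqref{tag2.13} remains valid for $u,z\in D(\Ama)$. Both properties are consequences of the continuity of $A_1^{-1}$ and of the Poisson operator $K_1$ recorded in \eqref{tag2.5}--\eqref{tag2.7}, so this is careful bookkeeping rather than a genuine obstacle.
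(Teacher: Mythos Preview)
Your derivation is correct and in fact more explicit than what the paper does here: the paper presents Proposition~2.1 as a summary (``To sum up'') of results established in the cited references \cite{G68,BGW09,G08}, without reproducing a proof. The route you take---decompose $u$, identify $\gamma_0u$ with $\gamma_Z(\pr_\zeta u)$, and use the generalized Green's formula \eqref{tag2.13} to show $\gamma_Z^*\Gamma u=\pr_Z\Ama u$---is exactly the computation underlying those references.

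One point to tighten: your use of \eqref{tag2.10} presupposes $0\in\varrho(\wA)$, whereas the proposition is stated without that hypothesis (invertibility is only assumed \emph{after} Proposition~2.1 in the paper). The fix is immediate and does not change your argument: replace the invocation of \eqref{tag2.10} by the decomposition $u=A_1^{-1}\Ama u+w$ with $w=\pr_\zeta u\in Z$, which holds for every $u\in D(\Ama)$. The abstract correspondence (cf.\ \eqref{tag6.14}) characterizes $D(\wA)$ by the conditions $w\in D(T)$ and $Tw=\pr_Z\Ama u$; from there your computation of $\gamma_0u=\gamma_Zw$ and $\gamma_Z^*\Gamma u=\pr_Z\Ama u=Tw$ goes through verbatim, yielding $\Gamma u=L\gamma_0u$ with $\gamma_0u\in D(L)$. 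The reverse inclusion follows by running the same identities backwards, so your appeal to the 1--1 correspondence at the end, while valid, is not strictly needed.
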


Assume in the following that $0\in\varrho (\wA)$, equivalently $T$ has a bounded, everywhere defined inverse $T^{-1}:Z\to
Z$, and  $L$ has a bounded everywhere defined inverse $L^{-1}:H^{\frac12}(\Sigma )\to
H^{-\frac12}(\Sigma )$.
Then \eqref{tag2.10} takes the form:
\begin{equation}
\wA^{-1}=A_1 ^{-1}+\inj_ZT^{-1}\pr_Z=A_1 ^{-1}+K_1 L^{-1}K_1 ^*.\label{tag2.18}
\end{equation}
Here $K_1 $ is the Poisson
operator for the Dirichlet problem (cf.\ \eqref{tag2.6}), considered as a 
mapping from $H^{-\frac12}(\Sigma )$ to $L_2(\Omega _+)$ 
(also equal to $\inj_Z\gamma _Z^{-1}$); its
adjoint $K_1^*$ goes from $L_2(\Omega _+)$ to $H^{\frac12}(\Sigma )$. 
The formula \eqref{tag2.18} can clearly be used to examine $\wA^{-1}$ as a
perturbation of $A_1^{-1}$; we pursue this fact below in our analysis
of essential spectra.

\begin{remark}\label{Remark2.2}
We are interested in cases where $T$ has an essential
spectrum outside of 0. As a specific example, one can think of 
\begin{equation}
T=aI \text{ on }Z, \text{ with }a\in {\mathbb R}\setminus \{0\};\label{tag 2.19}
\end{equation}
its essential spectrum is $\{a\}$, since $\dim Z=\infty $.
In this case, 
\begin{equation}
L=a (\gamma _Z^*)^{-1}\gamma _Z^{-1}=a\Lambda_{(-1)}, \text{ where }\Lambda
_{(-1)}:H^{-\frac12}(\Sigma )\simto H^\frac12(\Sigma ) \label{tag2.20}
\end{equation}
is a pseudodifferential operator elliptic of order $-1$, and invertible. (This is in
contrast to those boundary conditions \eqref{tag1.5} that satisfy the
Shapiro-Lopatinski\u\i{} condition; they have $L$ elliptic of order
$+1$.) Since this $L$ is defined on all of $H^{-\frac12}(\Sigma )$, which is
mapped by $P_{\gamma _0,\nu _A}$ to $H^{-\frac32}(\Sigma )$, $C$ maps
$D(L)$ into $H^{-\frac32}(\Sigma )$; it is only the difference
$L=C-P_{\gamma _0,\nu _A}$ that is assured to map into
$H^{\frac12}(\Sigma )$.
The realization $\wA$ defined by this choice has $Z\subset D(\wA)$,
so $D(\wA)$ is not contained in $H^s(\Omega _+)$ for any $s>0$. It is
a variant of Kre\u\i{}n's ``soft extension''.  
\end{remark}

\section {Cutoff techniques}\label{Cutoff} 

 For the analysis of the operators on exterior domains we shall need
to study cutoffs, by multiplication either by a smooth function or by a
``rough'' characteristic function supported at a distance from the boundary.
In \cite{G84,G84a}, smooth cutoffs were used and the exterior singular
Green operators estimated by a
commutator argument based on a series of nested cutoff functions. We
shall here give a simpler argument based on rough cutoffs.

Let $\Omega _>$ be a smooth open subset of $\Omega _+$ such that $\comega_- \subset
\complement \comega_>$, and denote $\Omega _+\cap\complement
\comega_>=\Omega _< $. So $\Omega _+=\Omega _>\cup \Omega _<\cup
\partial\Omega _>$. We denote by $r^>$ resp.\ $r^<$ the
restriction operators from $\Omega _+$ to $\Omega _>$ resp.\ $\Omega _<$, and
by  $e^>$ resp.\ $e^<$ the
extension operators extending a function given on $\Omega _>$ resp.\
$\Omega _<$ to a function on $\Omega _+$ by zero on the complement in
$\Omega _+$.

In the following we draw on the analysis of singular numbers of
compact operators as presented in Gohberg and Kre\u\i{}n \cite{GK69}. The
operators lying in the intersection of
Schatten classes $\bigcap _{r>0}{\cal C}_r$ (also equal to $\bigcap
_{r>0}T_r$) will be called {\it
spectrally negligible}.

\begin{proposition}\label{Proposition3.1} Let $K_1$ be the Poisson operator entering
in {\rm \eqref{tag2.6}},
continuous from $H^{s-\frac12}(\Sigma )$ to $H^{s}(\Omega _+)$ for
all $s\in {\mathbb R}$, and consider the operators $K_{1,>}=r^>K _1:H^{-\frac12}(\Sigma
)\to L_2(\Omega _>)$ and $K_{1,>}^*=(r^>K_1)^*=K_1 ^*e^>: L_2(\Omega _>)\to H^{\frac12}(\Sigma
)$. Then $r^>K_1$ in fact  maps continuously
\begin{equation}
r^>K _1:  H^{s-\frac12}(\Sigma )\to H^{s'}(\Omega _>), \text{ any
}s,s'\in{\mathbb R}.\label{tag3.1} 
\end{equation}
Moreover, the operators  $K_{1,>}$ and $K_{1,>}^*$
are compact and
spectrally negligible.

Similar statements hold for $K_{j,>}=r^>K_j :H^{-\frac32}(\Sigma
)\to L_2(\Omega _>)$ and $K_{j,>}^*=K_j ^*e^>: L_2(\Omega _>)\to H^{\frac32}(\Sigma
)$ for $j=2,3$.

\end{proposition}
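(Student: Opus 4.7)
\emph{Approach.} My plan is to exploit the pseudolocal structure of the Poisson operator $K_1$ from the Boutet de Monvel calculus (cf.\ \cite{G96}): its Schwartz kernel $K_1(x,y')$ on $\overline{\Omega}_+\times\Sigma$ is $C^\infty$ off the boundary diagonal $\{(x,y')\mid x=y'\}$. Since the bounded set $\overline{\Omega}_-$ is contained in $\complement\overline{\Omega}_>$, we have $\operatorname{dist}(\overline{\Omega}_>,\Sigma)\ge\delta>0$, so this kernel is jointly $C^\infty$ on $\overline{\Omega}_>\times\Sigma$. Combined with interior elliptic regularity (using $A(K_1\varphi)=0$ in $\Omega_+$) and the strict positivity $a_0\ge c_2>0$, which yields decay of $K_1(x,y')$ as $|x|\to\infty$, this will give both conclusions.

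\emph{Step 1: mapping property \eqref{tag3.1}.} Starting from the continuous map $K_1\colon H^{s-\frac12}(\Sigma)\to H^s(\Omega_+)$ for $s\ge 0$, I apply interior elliptic regularity to $K_1\varphi$ on an open set $\Omega_+\setminus U$ where $U$ is a neighbourhood of $\Sigma$ disjoint from $\overline{\Omega}_>$; this bootstraps the local $L_2$ control to $H^{s'}_{\mathrm{loc}}$ for arbitrary $s'$. The lower bound of $A$ moreover forces Agmon/Moser-type decay of $K_1\varphi$ and its derivatives at infinity, giving a uniform bound $r^>K_1\colon H^{s-\frac12}(\Sigma)\to H^{s'}(\Omega_>)$. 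Extension to negative $s$ follows from the continuity of Poisson operators on the full Sobolev scale.

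\emph{Step 2: spectral negligibility.} The idea is to reduce to a smoothing operator on the compact manifold $\Sigma$. I form
\[
K_{1,>}^*K_{1,>}\colon H^{-\frac12}(\Sigma)\to H^{\frac12}(\Sigma),
\]
whose Schwartz kernel is
\[
k(y',z')=\int_{\Omega_>}\overline{K_1(x,z')}\,K_1(x,y')\,dx.
\]
By Step 1 and the decay of $K_1(x,\cdot)$ as $|x|\to\infty$, the integrand and all its $(y',z')$-derivatives are rapidly decreasing in $x$, so the integral defines an element of $C^\infty(\Sigma\times\Sigma)$. Thus $K_{1,>}^*K_{1,>}$ is a smoothing operator on the compact $(n-1)$-dimensional manifold $\Sigma$; by Weyl asymptotics applied to a reference elliptic operator on $\Sigma$ (cf.\ \cite{GK69}), such operators have singular values $O(l^{-N})$ for every $N$, so $K_{1,>}^*K_{1,>}\in\bigcap_{r>0}\mathcal{C}_r$. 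Since $s_l(K_{1,>})^2=s_l(K_{1,>}^*K_{1,>})$, the operator $K_{1,>}$ inherits spectral negligibility, and hence so does its adjoint $K_{1,>}^*$.

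\emph{Extension to $j=2,3$ and main obstacle.} The operators $K_j$ for $j=2,3$ are Poisson operators of the same Boutet de Monvel type, with kernels sharing the pseudolocal structure; the shift from $-\frac12$ to $-\frac32$ in the Sobolev index simply reflects that $\nu_A$ and $\nu_A-b\gamma_0$ have order $1$ rather than $0$, so Steps 1--2 apply verbatim. I expect the main obstacle to be not the smoothness of the kernel off the diagonal (which is classical from the Boutet de Monvel calculus) but the quantitative $C^\infty$-decay of $K_1(x,y')$ as $|x|\to\infty$ needed for the integral defining $k(y',z')$ to converge in $C^\infty$. This decay should follow from the strict positivity of $a_0$ together with an elliptic decay estimate that commutes with $y'$-differentiation.
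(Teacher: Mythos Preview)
Your overall strategy is reasonable, but the paper's proof takes a cleaner route that sidesteps precisely the obstacle you flag at the end. Rather than establishing pointwise decay of the Poisson kernel $K_1(x,y')$ as $|x|\to\infty$, the paper observes that for any $\varphi$ the restriction $r^>K_1\varphi$ solves the \emph{Dirichlet problem for $A$ on the exterior domain $\Omega_>$}: it satisfies $A(r^>K_1\varphi)=0$ in $\Omega_>$, and its trace $\gamma_0^>K_1\varphi$ on $\partial\Omega_>$ is $C^\infty$ by interior elliptic regularity for $K_1\varphi$ on $\Omega_+$ (since $\partial\Omega_>$ lies at positive distance from $\Sigma$). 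The Dirichlet problem on $\Omega_>$ is well-posed by the same variational and regularity theory used for $\Omega_+$; the coercivity coming from $a_0\ge c_2>0$ enters here, packaged once and for all rather than as a separate Agmon-type decay estimate. Hence a null-solution with $C^\infty$ boundary data lies in $H^{s'}(\Omega_>)$ for every $s'$, which gives \eqref{tag3.1} directly.

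Once \eqref{tag3.1} holds, your Step~2 also simplifies: by duality $K_1^*e^>$ maps $(H^{s'}(\Omega_>))^*\to H^{-s+\frac12}(\Sigma)$ for all $s',s$, so the composition $K_{1,>}^*K_{1,>}$ maps $H^s(\Sigma)\to H^{s''}(\Sigma)$ for all $s,s''$. On the compact manifold $\Sigma$ this mapping property alone forces membership in $\bigcap_{r>0}\mathcal{C}_r$; no explicit kernel integral over the unbounded set $\Omega_>$ is needed, and the convergence question you raise never arises. Your route would go through if the decay were supplied, but as written that step is an assertion (``should follow'') rather than an argument; the paper's device of passing to the auxiliary Dirichlet problem on $\Omega_>$ is exactly what makes that estimate unnecessary.
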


\begin{proof} Denote by $\gamma _0^>$ the operator
restricting to $\partial \Omega _>$.
When $\varphi \in
H^{-\frac12}(\Sigma )$, it follows by the interior regularity for
solutions of the Dirichlet problem for $A$ on $\Omega _+$ that $\gamma
^>_0K_1 \varphi \in C^\infty (\partial \Omega _>)$. 
Then $r^>K_1 \varphi $ is a null-solution
of the
Dirichlet problem for $A$ on $\Omega _>$ with $C^\infty $ boundary value.
This
will also hold if $\varphi \in H^{s-\frac12}(\Sigma )$, any $s\in{\mathbb
R}$. We know from the variational theory and regularity theory for the Dirichlet problem on
$\Omega _>$ that a null-solution with $C^\infty $ boundary value 
 lies in $H^{s'}(\Omega _>)$ for any $s'$; hence
\eqref{tag3.1} holds. It follows by duality that
\begin{equation}
K_1 ^*e^>:  (H^{s'}(\Omega _>))^*\to
H^{-s+\frac12}(\Sigma ), \text{ any }s',s\in{\mathbb R};\label{tag3.2}
\end{equation}
here $(H^{s'}(\Omega _>))^*=H^{-s'}(\Omega _>)$ when $|s'|<\frac12$
(generally it equals the space $H^{-s'}_0(\Omega _>)$ of
distributions in $H^{-s'}({\mathbb R}^n)$ supported in $\overline{\Omega} _>$).
Taking $s'=0$, we see that  
\begin{equation}
K_{1}^*e^>r^>K_{1}:  H^{s}(\Sigma )\to   H^{s''}(\Sigma ), \text{for
all $s,s''$,}\label{tag3.3}
\end{equation}
 so since $\Sigma $ is compact, this operator is compact (from $H^s(\Sigma
)$ to $H^{s'}(\Sigma )$, any $s,s'$), and lies in $\bigcap _{r>0}{\cal
C}_r$, i.e.\ is spectrally negligible. Then $K_{1,>}$ is compact from 
$H^{s}(\Sigma )$ to $L_2(\Omega _>)$
for any $s$, in particular for $s=\frac12$,
 so  $K_{1,>}K^*_{1,>}$ is compact in $L_2(\Omega _>$), and hence $K_{1,>}^*: L_2(\Omega _>)\to H^{\frac12}(\Sigma
)$ is compact. In view of the identity $s_l(K_{1,>}^*K_{1,>})=s_l(K_{1,>}K_{1,>}^*)
$, all $l$, all four operators are spectrally negligible.

The proofs for $K_{2,>}$ and $K_{3,>}$ follow the same
pattern.
\end{proof}

\begin{corollary}\label{Corollary3.2} Let $\eta \in C_0^\infty ({\mathbb R}^n,{\mathbb R})$ be
such that $\eta =1$ on a neighborhood of $\comega _>$. Then the operators $K_{j,\eta }=(1-\eta )K_j$ from $H^{-\frac12}(\Sigma
)$ to $ L_2(\Omega _+)$ for $j=1$, resp.\ from $H^{-\frac32}(\Sigma
)$ to $ L_2(\Omega _+)$ for $j=2,3$, map continuously
\begin{equation}
(1-\eta )K _j:  H^{s-\frac12}(\Sigma )\to H^{s'}(\Omega _+), 
\text{ any }s,s'\in{\mathbb R},\label{tag3.4}
\end{equation}
 and are
spectrally negligible.
\end{corollary}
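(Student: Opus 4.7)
The plan is to reduce Corollary \ref{Corollary3.2} to Proposition \ref{Proposition3.1} by a three-step factorization through an auxiliary smooth region $\Omega'\subset\Omega_+$ that is bounded away from $\comega_-$.

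First I would observe that, although the hypothesis reads ``$\eta=1$ on a neighborhood of $\comega_>$,'' such a neighborhood may be chosen thick enough in $\mathbb{R}^n$ to also contain an open collar of $\Sigma$. This is compatible with $\comega_-\cap\comega_>=\emptyset$ because the neighborhood is permitted to extend across $\Sigma$ into $\Omega_-$; only $\comega_>$ itself is required to avoid $\comega_-$. I shall write $V$ for this enlarged neighborhood, on which $1-\eta\equiv 0$, noting that $V$ contains both $\comega_>$ and an open neighborhood of $\Sigma$.

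Next, pick a smooth open set $\Omega'\subset\Omega_+$ with $\overline{\Omega'}\cap\comega_-=\emptyset$ such that $\supp(1-\eta)\cap\Omega_+\subset\Omega'$ and $(1-\eta)\equiv 0$ on a neighborhood of $\partial\Omega'\cap\Omega_+$. One concrete choice is $\Omega'=\Omega_+\setminus\overline{W}$ for a smooth collar $W$ of $\Sigma$ with $\overline{W}\subset V$; then $\partial\Omega'\cap\Omega_+=\partial W\cap\Omega_+\subset V$, on which $(1-\eta)$ vanishes. One then has the factorization
\[
(1-\eta)K_j \;=\; e^{\Omega'}\, m_{(1-\eta)|_{\Omega'}}\, r^{\Omega'}K_j,
\]
valid because on $\Omega_+\setminus\Omega'$ the factor $(1-\eta)$ is identically zero by the support condition.

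Proposition \ref{Proposition3.1} applied with $\Omega'$ in place of $\Omega_>$ (its proof depends only on the disjointness $\overline{\Omega'}\cap\comega_-=\emptyset$ and the smoothness of $\Omega'$) yields that $r^{\Omega'}K_j:H^{s-\frac12}(\Sigma)\to H^{s'}(\Omega')$ is continuous and spectrally negligible for every $s,s'\in\mathbb{R}$. Multiplication by the smooth bounded function $(1-\eta)|_{\Omega'}$ is continuous on each $H^{s'}(\Omega')$, and since $(1-\eta)$ vanishes in a full neighborhood of $\partial\Omega'\cap\Omega_+$ (not merely on $\partial\Omega'$), the extension by zero $e^{\Omega'}:H^{s'}(\Omega')\to H^{s'}(\Omega_+)$ is bounded at every order $s'$. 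The claimed mapping property of $(1-\eta)K_j$ follows, and spectral negligibility is inherited from $r^{\Omega'}K_j$ by the two-sided ideal property of $\bigcap_{r>0}{\cal C}_r$; the corresponding statements for $K_{2,\eta}$ and $K_{3,\eta}$ are obtained by the same argument, only with the shifted Sobolev orders from the $j=2,3$ case of Proposition \ref{Proposition3.1}.

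The main obstacle is the geometric bookkeeping, namely ensuring that $V$ and $\Omega'$ can be chosen as described so that the three-step factorization is valid at every Sobolev order; once this is arranged, the ideal property of the spectrally negligible operators delivers the conclusion with no further calculation.
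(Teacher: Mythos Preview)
Your approach is essentially the paper's: factor $(1-\eta)K_j$ through a region disjoint from $\comega_-$ and invoke Proposition~\ref{Proposition3.1}. The paper does this in one line, writing $(1-\eta)K_j=e^>(1-\eta)K_{j,>}$ and noting that $e^>(1-\eta):H^{s'}(\Omega_>)\to H^{s'}(\Omega_+)$ is bounded for every $s'$ since $1-\eta$ vanishes near $\partial\Omega_>$. Your auxiliary $\Omega'$ simply plays the role that $\Omega_>$ already plays; no new region is needed.

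The only genuine problem is your opening paragraph. The function $\eta$ is \emph{given}, so you cannot ``choose'' the set $V$ on which $\eta=1$; under the hypothesis as printed there is no reason $V$ should meet $\Sigma$, since $\Sigma\subset\comega_-$ and $\comega_-\cap\comega_>=\emptyset$. In fact the printed hypothesis is internally inconsistent: $\Omega_>$ is the \emph{unbounded} piece of the decomposition (this is forced later by the compact embedding $H^2(\Omega_<)\hookrightarrow L_2(\Omega_<)$ used after \eqref{tag4.2}), so no $\eta\in C_0^\infty$ can equal $1$ on a neighborhood of $\comega_>$. The intended hypothesis is that $\eta=1$ on a neighborhood of $\comega_<$; this is also what the proof of Theorem~\ref{Theorem3.5} actually uses, since the identity $K_jT_jQ_+=K_jT_j\eta Q_+$ there requires $\eta=1$ near $\Sigma$. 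With that correction one has $\Sigma\subset\comega_<\subset V$ automatically, your construction of $\Omega'$ goes through without further justification, and taking $\Omega'=\Omega_>$ recovers the paper's argument verbatim.
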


\begin{proof} We can write $K_{j,\eta }=(1-\eta )K_j=e^>(1-\eta
)K_{j,>}$, where Proposition 3.1 applies to $K_{j,>}$,
and  $e^>(1-\eta )$ is bounded from $H^{s'}(\Omega _>)$ to
$H^{s'}(\Omega _+)$, any $s'$.
\end{proof}

\begin{corollary}\label{Corollary3.3} Consider the 
singular Green operators $G_j=-K_jT_jQ_+$ as in {\rm \eqref{tag2.7}, \eqref{tag2.7a} } with 
\begin{equation}
T_1=\gamma _0,\quad T_2=\nu _A, \quad T_3=\nu _A-b\gamma _0.\label{tag3.5}
\end{equation}
For $\eta $ as in Corollary {\rm 3.2}, the operators $(1-\eta )G_j$
are spectrally negligible.
\end{corollary}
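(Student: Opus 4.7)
The plan is to factor $(1-\eta)G_j$ through the boundary and apply Corollary \ref{Corollary3.2} together with the ideal property of Schatten classes. Explicitly, writing $G_j=-K_jT_jQ_+$ we would factor
\begin{equation*}
(1-\eta )G_j= -\bigl[(1-\eta )K_j\bigr]\circ T_j\circ Q_+ \quad\text{on }L_2(\Omega _+),
\end{equation*}
and treat the three factors from right to left.

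First I would check the mapping properties of the two non-cutoff factors. Since $Q=A_0^{-1}$ is the pseudodifferential inverse of a uniformly elliptic second-order operator on $\mathbb R^n$, it maps $L_2(\mathbb R^n)\to H^2(\mathbb R^n)$, and hence $Q_+=r^+Qe^+$ is bounded $L_2(\Omega _+)\to H^2(\Omega _+)$. The trace map $T_1=\gamma _0$ is bounded $H^2(\Omega _+)\to H^{\frac32}(\Sigma )$, while $T_2=\nu _A$ and $T_3=\nu _A-b\gamma _0$ are bounded $H^2(\Omega _+)\to H^{\frac12}(\Sigma )$. Composing, $T_1Q_+$ is bounded $L_2(\Omega _+)\to H^{\frac32}(\Sigma )$ and $T_jQ_+$ ($j=2,3$) is bounded $L_2(\Omega _+)\to H^{\frac12}(\Sigma )$.

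Next I would invoke Corollary \ref{Corollary3.2}, which tells us that $(1-\eta )K_1$ is continuous $H^{s-\frac12}(\Sigma )\to H^{s'}(\Omega _+)$ for any $s,s'\in\mathbb R$ (and spectrally negligible as an operator into $L_2(\Omega _+)$), and similarly for $(1-\eta )K_j$ with $j=2,3$ on $H^{s-\frac32}(\Sigma )$. In particular $(1-\eta )K_1$ is spectrally negligible as a map $H^{\frac32}(\Sigma )\to L_2(\Omega _+)$, and $(1-\eta )K_j$ for $j=2,3$ is spectrally negligible as a map $H^{\frac12}(\Sigma )\to L_2(\Omega _+)$ (the continuous embeddings $H^{\frac32}\hookrightarrow H^{-\frac12}$ and $H^{\frac12}\hookrightarrow H^{-\frac32}$ on the compact manifold $\Sigma $ only improve the singular-value decay, so the restrictions to the smaller spaces remain in $\bigcap_{r>0}\mathcal C_r$).

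Finally, since each class $\mathcal C_r$ is a two-sided operator ideal, composing the spectrally negligible factor $(1-\eta )K_j$ with the bounded operator $T_jQ_+$ yields an operator in $\bigcap_{r>0}\mathcal C_r$, which is precisely the statement that $(1-\eta )G_j$ is spectrally negligible. I do not anticipate a serious obstacle here: the argument is a direct packaging of the cutoff estimate of Corollary \ref{Corollary3.2} with the standard continuity of $Q_+$ and of the trace-type operators $T_j$ on $H^2(\Omega _+)$; the only point requiring attention is matching the Sobolev indices so that $(1-\eta )K_j$ is applied within the range for which Corollary \ref{Corollary3.2} guarantees spectral negligibility.
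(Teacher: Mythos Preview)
Your proposal is correct and follows essentially the same route as the paper: factor $(1-\eta)G_j=-[(1-\eta)K_j]\,T_jQ_+$, note that $T_jQ_+$ is bounded from $L_2(\Omega_+)$ into the appropriate $H^{s}(\Sigma)$, and invoke Corollary~\ref{Corollary3.2} together with the ideal property of the Schatten classes. Your extra care about matching Sobolev indices is fine but not strictly needed, since Corollary~\ref{Corollary3.2} already gives the mapping property for all $s,s'$.
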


\begin{proof} This follows since $T_jQ_+$ is bounded from $L_2(\Omega
_+)$ to $H^{\frac32}(\Sigma )$ for $j=1$, and from $L_2(\Omega
_+)$ to $H^{\frac12}(\Sigma )$ for $j=2,3$, and the $(1-\eta )K_j$ map
into $C^\infty $ and are spectrally negligible by Corollary 3.2.
\end{proof}

\begin{remark}\label{Remark3.4}
The proofs given above rely on the solvability properties of the
exterior problems for $A$. The properties can also be inferred  
from a
general principle shown in \cite{G96}, Lemma 2.4.8, on cutoffs of
Poisson operators, prepared for the  definition on admissible
manifolds (which include exterior domains).
Moreover, the lemma deals with a parameter-dependent pseudodifferential
boundary operator calculus, including a spectral parameter $\mu $. 
In this setting, when we consider the
Poisson operator family
$K_j^\lambda $ for $\{A-\lambda , T_j\}$, $\lambda $ on a ray
$\{\lambda =-\mu^2 e^{i\theta }\}$ in
${\mathbb C}\setminus {\mathbb R}_+$, it is of {\it regularity} $\nu
=+\infty $. Lemma 2.4.8 then implies that
$(1-\eta )K_j^\lambda $ is of order $-\infty$ and regularity $+\infty $, hence maps
$H^{s,\mu }(\Sigma )\to H^{s',\mu }(\Omega _+)$ for all $s,s'\in{\mathbb
R}$. Then $(K_j^\lambda )^*(1-\eta
)^2K_j^\lambda$
maps $H^{s,\mu }(\Sigma )\to H^{s'',\mu }(\Sigma )$ for all
$s,s''\in{\mathbb R}$. (The $H^{s,\mu }$-norms are based on the definition
of the norm on $H^{s,\mu }({\mathbb R}^n)$, namely $\|u\|_{s,\mu
}=\|{\cal F}^{-1}(\ang{(\xi,\mu )} ^s{\cal F}u)\|_{L_2({\mathbb R}^n)}$.) From this we can conclude both Corollary 3.2 and the
fact that any Schatten norm of $(1-\eta )K^\lambda _j$ is $O(\lambda ^{-N})$ (any $N$) for
$\lambda \to\infty $ on the ray, as first shown in
\cite{G84a}. Proposition 3.1 follows from this if we replace
$\eta $ by $\eta _1$ supported in $\complement \comega_>$ and equal to
1 on a neighborhood of $\Omega _-$; then $r^>K^\lambda _j=r^>(1-\eta
_1)K^\lambda _j$. Also here we get the rapid decrease in $\lambda $ of
the Schatten  norms.
\end{remark}

We use the results first to reprove the theorems of Birman \cite{B62} and
Birman-Solomiak \cite{BS80} with a slight elaboration, essentially as in
\cite{G84}, \cite{G84a}.

\begin{theorem}\label{Theorem3.5} For $j,k=1,2,3$, let
\begin{equation}
P_j=A_0^{-1}-A_j^{-1}\oplus 0_{L_2(\Omega _-)},\quad
G'_j=A_0^{-1}-A_j^{-1}\oplus (A_0^{-1})_-,\quad
G_{jk}=A_j^{-1}-A_k^{-1}.\label{tag3.6}\end{equation}
Then 
\begin{equation}
P_j\in T_{2/n},\quad G'_j\text{ and }G_{jk}\in T_{2/(n-1)}.\label{tag3.7}
\end{equation} 
Moreover, there are spectral asymptotics formulas for $l\to\infty $:
\begin{equation}
s_l(P_j)l^{2/n}\to C_0,\quad s_l(G_{jk})l^{2/(n-1)}\to C_{jk},\label{tag3.8}
\end{equation}
where the constants are determined from the principal symbols. Here
$C_0$ is the constant in the spectral asymptotics formula for
$(A_0^{-1})_-$, namely $C_0=\lim_{l\to\infty
}s_l((A_0^{-1})_-)l^{2/n}$, defined from the principal symbol of $A_0$
on $\Omega _-$. 
\end{theorem}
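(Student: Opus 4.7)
My plan is to use the representations $A_j^{-1}=Q_++G_j$ with $G_j=-K_jT_jQ_+$ from \eqref{tag2.7}, \eqref{tag2.7a}, \eqref{tag3.5} to decompose the operators in block form relative to $L_2(\Rn)=L_2(\Omega _+)\oplus L_2(\Omega _-)$. A direct computation gives
\[
P_j=\begin{pmatrix}-G_j & r^+Qe^-\\ r^-Qe^+ & Q_-\end{pmatrix},\qquad G_j'=P_j-\bigl(0\oplus(A_0^{-1})_-\bigr),\qquad G_{jk}=G_j-G_k,
\]
so $P_j$ splits as the bulk piece $0\oplus(A_0^{-1})_-$, which is $Q_-$ acting on the bounded domain $\Omega _-$, plus the boundary-attached correction $G_j'$, whose four block entries are all expressible in terms of the Dirichlet Poisson and singular Green constituents. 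The task then separates into extracting the Weyl asymptotic of the bulk and showing that the four corrections all sit in $T_{2/(n-1)}$.

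For the bulk, $(A_0^{-1})_-$ is the truncation of the strongly elliptic second-order pseudodifferential operator $Q=A_0^{-1}$ to the bounded smooth domain $\Omega _-$, and the classical Weyl law gives $s_l((A_0^{-1})_-)l^{2/n}\to C_0$ with $C_0$ the standard integral over $T^*\Omega _-$ of the principal symbol of $A_0$. For the boundary-attached pieces I fix $\eta \in C_0^\infty (\Rn)$ equal to $1$ on a neighborhood of $\comega _-$; Corollary \ref{Corollary3.3} makes $(1-\eta )G_j$ spectrally negligible. The off-diagonal entry $r^+Qe^-$ satisfies $A(Qe^-g)=0$ on $\Omega _+$, so $r^+Qe^-=K_1\gamma _0Qe^-$, and since $\gamma _0Qe^-:L_2(\Omega _-)\to H^{3/2}(\Sigma )$ is bounded, Corollary \ref{Corollary3.2} makes $(1-\eta )r^+Qe^-$ spectrally negligible; the analogous statement for $(1-\eta )r^-Qe^+$ follows by adjoining. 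Modulo operators in $\bigcap _{r>0}{\cal C}_r$, everything is thus supported in a bounded tubular neighborhood of $\Sigma $, where the standard Birman--Solomiak/Grubb spectral asymptotics for singular Green operators of order $-2$ and class $0$ on a bounded smooth domain apply, yielding $G_{jk}\in T_{2/(n-1)}$ and $G_j'\in T_{2/(n-1)}$, together with $s_l(G_{jk})l^{2/(n-1)}\to C_{jk}$, where $C_{jk}$ is the usual integral over $T^*\Sigma $ of a principal-boundary-symbol expression built from $G_j-G_k$.

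To finish the statements for $P_j$, since $n\ge 2$ one has $T_{2/(n-1)}\subset T_{2/n}$, so $G_j'$ is of strictly lower order than the bulk piece $0\oplus(A_0^{-1})_-$, and a standard Ky~Fan perturbation argument transfers the $l^{-2/n}$ Weyl asymptotic from the bulk to $P_j$, giving $P_j\in T_{2/n}$ and $s_l(P_j)l^{2/n}\to C_0$. The main obstacle I anticipate is the rigorous handling of the off-diagonal entries $r^\pm Qe^\mp$: the identity $r^+Qe^-=K_1\gamma _0Qe^-$ brings them within reach of Corollary \ref{Corollary3.2}, but one still has to verify that after the two-sided localization they fit inside the $(-2,0)$-type singular Green calculus on a bounded domain and do not corrupt the principal boundary symbol governing $C_{jk}$. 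A secondary, more bookkeeping obstacle is to check that the three trace operators $T_1,T_2,T_3$ of \eqref{tag3.5} produce principal symbols in $G_1,G_2,G_3$ whose pairwise differences are genuinely of the stated order, so that the asymptotic constant $C_{jk}$ is nonzero and the claimed rate for $G_{jk}$ is not improved by accidental cancellation.
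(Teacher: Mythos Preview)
Your overall strategy coincides with the paper's: write $A_j^{-1}=Q_++G_j$, display $P_j$ and $G'_j$ in block form relative to $L_2(\Omega_+)\oplus L_2(\Omega_-)$, localize near $\Sigma$ via Corollaries~\ref{Corollary3.2}--\ref{Corollary3.3}, and invoke \cite{G84} Th.~4.10 on the bounded piece. Your handling of $G_{jk}$ and of $P_j=G'_j+0\oplus Q_-$ matches the paper.

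The gap is precisely the one you flag as the main obstacle, and you do not resolve it. The identity $r^+Qe^-=K_1\gamma_0Qe^-$ is correct and, with Corollary~\ref{Corollary3.2}, cleanly disposes of $(1-\eta)r^+Qe^-$. But after localization, the surviving off-diagonal piece is an operator from $L_2(\Omega_-)$ to $L_2(\Omega'\cap\Omega_+)$; it is a \emph{two-sided} object, not a singular Green operator on any single bounded domain, so \cite{G84} Th.~4.10 does not apply to it as stated. Your sentence ``everything \ldots\ fits inside the $(-2,0)$-type singular Green calculus on a bounded smooth domain'' therefore does not cover $G'_j$.

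The paper closes this gap by squaring. With $\tilde G=e^+r^+Qe^-r^-+e^-r^-Qe^+r^+$ (the off-diagonal part of $G'_j$) one has
\[
\tilde G^2=\bigl(r^+Qe^-r^-Qe^+\bigr)\oplus\bigl(r^-Qe^+r^+Qe^-\bigr)
=L_{\Omega_+}(Q,Q)\oplus L_{\Omega_-}(Q,Q),
\]
where $L_{\Omega_\pm}(Q,Q)=(Q^2)_\pm-Q_\pm Q_\pm$ are the leftover operators, which \emph{are} genuine singular Green operators of order $-4$ on the respective one-sided domains. After the same cutoff reduction on the $\Omega_+$ side, \cite{G84} Th.~4.10 gives $s_l(\tilde G^2)l^{4/(n-1)}\to C$, hence $s_l(\tilde G)l^{2/(n-1)}\to C^{1/2}$ and $G'_j\in T_{2/(n-1)}$. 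Your factorization could be pushed to the same conclusion, since $(r^+Qe^-)(r^+Qe^-)^*=r^+Qe^-r^-Qe^+=L_{\Omega_+}(Q,Q)$, but that is exactly the squaring trick and needs to be said.

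Two minor points. First, the off-diagonal blocks occur only in $G'_j$ (and $P_j$), not in $G_{jk}$, so they cannot ``corrupt the principal boundary symbol governing $C_{jk}$''; moreover the theorem claims a Weyl asymptotic only for $P_j$ and $G_{jk}$, merely $T_{2/(n-1)}$-membership for $G'_j$. Second, the statement does not assert $C_{jk}\ne 0$; if the principal singular Green symbols of $G_j$ and $G_k$ happen to coincide one simply gets $C_{jk}=0$, which is still the claimed formula.
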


\begin{proof} 
We use the notation in \eqref{tag2.7}\,ff.\ and Corollary
3.3; in particular, $A_0^{-1}=Q$. It is well-known that $Q_-$ is
compact, with the asserted spectral asymptotics.

Consider first $G_{jk}$; in view of \eqref{tag2.7} it can be written
\begin{equation}
G_{jk}=-K_jT_jQ_++K_kT_kQ_+.
\end{equation}
Let $\eta $ be as in Corollary 3.2 and let
$\eta '\in C_0^\infty ({\mathbb R}^n,{\mathbb R})$, supported in a smooth bounded set $\Omega
'$ and with $\eta ' =1$ on a neighborhood
of $\supp\eta $. 
We can rewrite $-G_j=K_jT_jQ_+$ as follows:
\begin{equation}
K_jT_jQ_+=K_jT_j\eta Q_+=
\eta 'K_jT_j \eta Q_+\eta '+\eta 'K_jT_j \eta Q_+(1-\eta ')
 +(1-\eta ')K_jT_j \eta Q_+.\label{tag3.9}
\end{equation}
Here the first term is a singular Green operator on $\Omega '\cap
\Omega _+$ to which the calulus for bounded domains can be applied, and
the two other terms are  spectrally negligible. In  fact, $(1-\eta ')K_j
$ is so by Corollary 3.2, and for $\eta Q(1-\eta ')$ we can use
that it maps $H^s({\mathbb R}^n)$ continuously into
$H^{s'}(\Omega ')$ for all $s$ and $s'$, since $\supp\eta \cap \supp
(1-\eta ')=\emptyset$ so that the operator is of order $-\infty
$. Then since $\Omega '$ is bounded, the operator is spectrally
negligible, and so are its compositions with bounded operators.

The same arguments apply to $K_kT_kQ_+$, so we find that
\begin{equation}
G_{jk}= \eta '(-K_jT_j +K_kT_k)\eta Q_+\eta ' +{\cal R},
\end{equation}
where ${\cal R}$ is spectrally negligible and the first term is a
singular Green operator in $\Omega '\cap\Omega _+$. To the first term
we apply 
\cite{G84} Th.\ 4.10, which shows that this term is in $T_{2/(n-1)}$
and satisfies a spectral asymptotics formula as in \eqref{tag3.8}; these
facts are preserved when the spectrally negligible  term ${\cal R}$
is added on. This shows the assertions for the $G_{jk}$.

The
treatments of $K_jT_jQ_+$ in \cite{G84} (with misprints) and
\cite{G84a} are a bit more complicated in
their use of commutators  and nested sequences of cutoff functions. 

Next, consider $G'_j$. Here, since $Q_+\oplus 0=e^+r^+Qe^+r^+$ and
$0\oplus Q_-=e^-r^-Qe^-r^-$,
\begin{align*}
G'_j&=A_0^{-1}-A_j^{-1}\oplus (A_0^{-1})_-=Q-(Q_+-K_jT_jQ_+)\oplus Q_-\\
&=Q-Q_+\oplus Q_-+K_jT_jQ_+\oplus 0=e^+r^+Qe^-r^-+ e^-r^-Qe^+r^++K_jT_jQ_+\oplus 0.
\end{align*}
For $\tilde G=e^+r^+Qe^-r^-+ e^-r^-Qe^+r^+$ we proceed as in
\cite{G84}, Th.\ 5.1: Consider
\begin{equation}
\tilde G^2=e^+r^+Qe^-r^-Qe^+r^++ e^-r^-Qe^+r^+Qe^-r^-.\label{tag3.10}
\end{equation}
The second term acts like $0\oplus L_{\Omega _-}(Q,Q)$, where
$L_{\Omega _-}(Q,Q)=Q^2_--Q_-Q_-$ is the ``leftover operator'' for the
composition of $Q_-$ with $Q_-$; it is a singular Green operator 
and has a spectral
asymptotics formula with exponent $4/(n-1)$, by \cite{G84} Th.\ 4.10. (It
was in the quoted paper that the analysis of leftover operators in
terms of $e^+r^+Qe^-r^-$ and $e^-r^-Qe^+r^+$  was
first introduced.) 

The first term in \eqref{tag3.10} identifies similarly with a leftover operator 
on $\Omega
_+$, hence a singular Green operator, but since $\Omega _+$ is unbounded, we need more argumentation to
show that it is a compact operator with the desired spectral
asymptotics. With $\eta $ and $\eta '$ as above, we can write it:
\begin{align}
L_{\Omega _+}&(Q,Q)=L_{\Omega _+}(Q\eta ,\eta Q)\nonumber\\
=L_{\Omega _+}&(\eta 'Q\eta ,\eta Q\eta ')+L_{\Omega _+}((\eta 'Q\eta ,\eta Q(1-\eta '))+L_{\Omega _+}((1-\eta ')Q\eta ,\eta Q).
\label{tag3.11}
\end{align}
Here $\eta Q(1-\eta ')$ is spectrally negligible as noted above, and its adjoint
$(1-\eta ')Q\eta $ is likewise spectrally negligible. So $L_{\Omega
_+}(Q,Q)$ is the sum of a spectrally negligible part and $L_{\Omega _+}(\eta 'Q\eta ,\eta Q\eta ')$, a
singular Green operator in $\Omega '\cap \Omega _+$.

Thus $\tilde G^2=L_{\Omega _+}(\eta 'Q\eta ,\eta Q\eta ')\oplus
L_{\Omega _-}(Q,Q)$ plus spectrally negligible terms, so it follows from \cite{G84} Th.\ 4.10 that $\tilde G^2$ has a spectral
asymptotics behavior $s_l(\tilde G^2)l^{4/(n-1)}\to C$, and then
$\tilde G$ satisfies $s_l(\tilde G)l^{2/(n-1)}\to C^
\frac12$. 

We still have to include the term $K_jT_jQ_+\oplus 0$, but the
nontrivial part was already
treated further  above, and is seen to have a similar spectral 
asymptotics behavior.
Adding all contributions and using the rules for $s$-numbers, we find 
that $G'_j\in T_{2/(n-1)}$.

For $P_j$, we simply use that
\begin{equation}
P_j=G'_j+0_{L_2(\Omega _+)}\oplus Q_-,\label{tag3.12}
\end{equation}
where perturbation formulas as in \cite{G84} show that the spectral asymptotics formula for $Q_-$ dominates the
behavior. One could moreover give remainder estimates (as done in \cite{G84}).
\end{proof}

\begin{remark}\label{Remark3.6} The estimates also hold when $b$ for
  $A_3$ is
  replaced by a first-order differential operator $B$ such that the
  realization is elliptic and invertible. Related results are found for
$
G_{jk}^{(N)}=A_j^{-N}-A_k^{-N} 
$, which is a singular Green operator on $\Omega _+$ of the form of a
sum of Poisson operators composed with trace operators; this leads to
asymptotic estimates for all positive integers $N$:
$s_l(G_{jk}^{(N)})l^{2N/(n-1)}\to C^{(N)}_{jk}
$ for $l\to\infty $.
\end{remark}

\section {Perturbations}\label{Perturbations}

We shall now investigate the question of perturbations of the
essential spectrum.

When $f\in L_2(\Omega _+)$,
we also write $r^<f=f_<$, $r^>f=f_>$. 
Let us rewrite the action of $\wA^{-1}$ on $f\in L_2(\Omega )$ in terms
of its action on the parts $f_<$ and $f_>$, with matrix notation. When
$u=\wA^{-1}f$, we have that
\begin{equation}
u=\begin{pmatrix} u_<\\u_>\end{pmatrix}=\wA^{-1}\begin{pmatrix} f_<\\f_>\end{pmatrix}
=\begin{pmatrix} r^<\wA^{-1}e^< &\; r^<\wA^{-1}e^>\\r^>\wA^{-1}e^< &\; r^>\wA^{-1}e^>
\end{pmatrix}\begin{pmatrix} f_<\\f_>\end{pmatrix}.\label{tag4.1}
\end{equation}

Recalling \eqref{tag2.18}, we shall decompose the operators $A_1^{-1}$ and
$K_1LK_1^*$ in a similar way. For $A_1^{-1}$ we have:
\begin{align}
A_1 ^{-1}&=\begin{pmatrix} r^<A_1 ^{-1}e^< &\;r^<A_1 ^{-1}e^>\\r^>A_1 ^{-1}e^< & \;r^>A_1 ^{-1}e^>
\end{pmatrix} \label{tag4.2}\\
&=\begin{pmatrix} r^<A_1 ^{-1}e^< &\;r^<A_1 ^{-1}e^>\\r^>A_1 ^{-1}e^< &\; 0
\end{pmatrix} +\begin{pmatrix} 0 &\;0\\0 &\; r^>A_1 ^{-1}e^>
\end{pmatrix}. 
\nonumber
\end{align}
The entries in the first matrix are compact in $L_2$-norm, since
$r^<A_1 ^{-1}e^<$ maps $L_2(\Omega _<)$ into $ H^2(\Omega
_<)$ and  
$r^<A_1 ^{-1}e^>$ maps $L_2(\Omega _>)$ into $H^2(\Omega
_<)$, where the injection $H^2(\Omega
_<)\hookrightarrow L_2(\Omega _<)$ is compact, and  
 $e^>r^>A_1 ^{-1}e^<r^<$ is the adjoint of $e^<r^<A_1 ^{-1}e^>r^>$ in
 $L_2(\Omega _+)$. Since a compact perturbation leaves the essential
 spectrum invariant, the second
matrix has the same essential spectrum as $A_1^{-1}$, and we know from
Theorem \ref{Theorem3.5} that this equals $\sigma
_{\operatorname{ess}}A_0^{-1}$. In other words,
\begin{equation}
A_1^{-1}=0_{L_2(\Omega _<)}\oplus (r^>A_1 ^{-1}e^>)+ S_1,\label{tag4.3}
\end{equation}
where $S_1$ is compact in $L_2(\Omega _+)$ and  $\sigma
_{\operatorname{ess}}A_1^{-1}=\sigma
_{\operatorname{ess}}A_0^{-1}$.

Next, we write
\begin{align} 
K_1 L^{-1}&K_1 ^*=\begin{pmatrix} r^<K_1 L^{-1}K_1 ^*e^< &\; r^<K_1 L^{-1}K_1 ^*e^>\\r^>K_1 L^{-1}K_1 ^*e^< &\; r^>K_1 L^{-1}K_1 ^*e^>
\end{pmatrix} \label{tag4.4}\\
&= \begin{pmatrix} r^<K_1 L^{-1}K_1 ^*e^< &\; 0\\0 &\; 0
\end{pmatrix}+\begin{pmatrix} 0 &\; r^<K_1 L^{-1}K_1 ^*e^>\\r^>K_1 L^{-1}K_1 ^*e^< &\; r^>K_1 L^{-1}K_1 ^*e^>
\end{pmatrix}.\nonumber
\end{align}
In the last matrix, every nonzero element is the composition of a
bounded operator with either $r^>K_1$ or $K_1^*e^>$, hence is
spectrally negligible in view of Proposition \ref{Proposition3.1}. So this whole matrix
is spectrally negligible. In other words,
\begin{equation}
 K_1 L^{-1}K_1 ^*= (r^<K_1 L^{-1}K_1 ^*e^<) \oplus  0_{L_2(\Omega _>)}+S_2,\label{tag4.5}
\end{equation}
where $S_2$ is spectrally negligible. In particular, $ r^<K_1
L^{-1}K_1 ^*e^< \oplus 0_{L_2(\Omega _>)}$ has the same essential 
spectrum as $K_1 L^{-1}K_1^*$. 

Recall furthermore that 
\begin{equation*}
K_1 L^{-1}K^*_1=\inj_Z T^{-1}\pr_Z,
\end{equation*}
where $Z$ is infinite dimensional, hence 
\begin{equation}
\sigma
_{\operatorname{ess}}(r^<K_1 L^{-1}K_1 ^*e^<)\cup \{0\}=
\sigma
_{\operatorname{ess}}(K_1 L^{-1}K_1 ^*)=
\sigma
_{\operatorname{ess}}T^{-1}\cup\{0\}.\label{tag4.6}
\end{equation}

Adding \eqref{tag4.3} and
\eqref{tag4.5}, setting $S=S_1+S_2$, and observing that $0\in \sigma
_{\operatorname{ess}}A_0^{-1}$, $0\in \sigma
_{\operatorname{ess}}\wA^{-1}$ ($A_0$ and $\wA$ are unbounded
operators), we conclude:

\begin{theorem}\label{Theorem4.1} Let $\wA$ be as in Proposition {\rm \ref{Proposition2.1}}, and
assume that $0\in \varrho (\wA)$. Then  $\wA^{-1}$ can be
written  as the sum of a compact
operator $S$ in $L_2(\Omega _+)$ and an operator decomposed into a part acting in
$L_2(\Omega _<)$ and a part acting in $L_2(\Omega _>)$:
\begin{equation}
\wA^{-1}=(r^<K_1 L^{-1}K_1 ^*e^<)\oplus (r^>A_{1}^{-1}e^>)+S.\label{tag4.7}
\end{equation}
Here
\begin{equation}
\sigma
_{\operatorname{ess}}(r^<K_1 L^{-1}K_1 ^*e^<)\cup\{0\}=\sigma
_{\operatorname{ess}}T^{-1}\cup\{0\},\quad  \sigma
_{\operatorname{ess}}(r^>A_{1 }^{-1}e^>)\cup \{0\}= \sigma
_{\operatorname{ess}}A_{0}^{-1},\label{tag4.8}
\end{equation}
and hence
\begin{equation}
\sigma _{\operatorname{ess}}\wA^{-1}
=\sigma
_{\operatorname{ess}}T^{-1}\cup \sigma
_{\operatorname{ess}}A_{0}^{-1}.\label{tag4.9}
\end{equation}

\end{theorem}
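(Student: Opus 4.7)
My plan is to start from the Kre\u\i{}n-type identity \eqref{tag2.18}, $\wA^{-1}=A_1^{-1}+K_1L^{-1}K_1^*$, and decompose each summand as a $2\times 2$ block matrix relative to the orthogonal splitting $L_2(\Omega_+)=L_2(\Omega_<)\oplus L_2(\Omega_>)$, exactly as displayed in \eqref{tag4.2} and \eqref{tag4.4}. For the $A_1^{-1}$ summand, interior regularity of the Dirichlet realization makes $r^<A_1^{-1}e^<$ and $r^<A_1^{-1}e^>$ continuous into $H^2(\Omega_<)$; Rellich's compact embedding $H^2(\Omega_<)\hookrightarrow L_2(\Omega_<)$ then makes them compact, and the remaining block $r^>A_1^{-1}e^<$ is compact as the $L_2(\Omega_+)$-adjoint of a compact operator. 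Collecting these three blocks yields a compact operator $S_1$, and leaves the single diagonal block $r^>A_1^{-1}e^>$ as the only surviving piece.

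For the second summand, \eqref{tag4.4} shows that every block other than $r^<K_1L^{-1}K_1^*e^<$ has either $r^>K_1$ or its adjoint $K_1^*e^>$ as a factor, composed with a bounded operator. Proposition \ref{Proposition3.1} tells us that both of these boundary operators are spectrally negligible, so the sum $S_2$ of those three blocks lies in every Schatten class and is in particular compact. Setting $S=S_1+S_2$ and assembling the two block decompositions delivers \eqref{tag4.7}.

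To pass to essential spectra, I invoke Weyl's invariance under compact perturbation to reduce $\sigma _{\operatorname{ess}}\wA^{-1}$ to $\sigma _{\operatorname{ess}}((r^<K_1L^{-1}K_1^*e^<)\oplus(r^>A_1^{-1}e^>))$, which is the union of the two individual essential spectra since both $L_2(\Omega_<)$ and $L_2(\Omega_>)$ are infinite dimensional. For the $>$-block I reintroduce $S_1$ to recover $A_1^{-1}$ modulo compacts and apply Theorem \ref{Theorem3.5} to identify $\sigma _{\operatorname{ess}}A_1^{-1}=\sigma _{\operatorname{ess}}A_0^{-1}$. For the $<$-block I use the identity $K_1L^{-1}K_1^*=\inj_ZT^{-1}\pr_Z$, which together with the infinite dimensionality of $Z$ (and hence of $Z^\perp$) yields $\sigma _{\operatorname{ess}}(K_1L^{-1}K_1^*)=\sigma _{\operatorname{ess}}T^{-1}\cup\{0\}$; the spectrally negligible difference between $K_1L^{-1}K_1^*$ and $(r^<K_1L^{-1}K_1^*e^<)\oplus 0_{L_2(\Omega_>)}$ then gives the first identity in \eqref{tag4.8}. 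Since $0\in \sigma _{\operatorname{ess}}A_0^{-1}$ (because $A_0$ is unbounded), the two essential spectra unite to \eqref{tag4.9}.

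The only real obstacle is the bookkeeping around $\{0\}$: one must track how $0$ is adjoined whenever a summand is augmented by the zero operator on an infinite-dimensional complementary subspace, and then verify that these adjoined $0$'s are harmlessly absorbed by the final union $\sigma _{\operatorname{ess}}T^{-1}\cup \sigma _{\operatorname{ess}}A_0^{-1}$. All the analytic content is already packaged in Proposition \ref{Proposition3.1} and Theorem \ref{Theorem3.5}, so once the block decompositions \eqref{tag4.2}--\eqref{tag4.5} are in place the argument is mechanical.
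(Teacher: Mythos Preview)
Your proposal is correct and follows essentially the same approach as the paper: the block decomposition of $A_1^{-1}$ via \eqref{tag4.2}--\eqref{tag4.3} using Rellich compactness on $\Omega_<$, the block decomposition of $K_1L^{-1}K_1^*$ via \eqref{tag4.4}--\eqref{tag4.5} using Proposition~\ref{Proposition3.1}, and the identification of the essential spectra via Theorem~\ref{Theorem3.5} and the identity $K_1L^{-1}K_1^*=\inj_ZT^{-1}\pr_Z$ are exactly the ingredients the paper assembles in the paragraphs preceding the theorem. One small wording point: the compactness of $r^<A_1^{-1}e^<$ and $r^<A_1^{-1}e^>$ comes simply from $A_1^{-1}:L_2(\Omega_+)\to H^2(\Omega_+)$ being bounded and $H^2(\Omega_<)\hookrightarrow L_2(\Omega_<)$ being compact, not from interior regularity per se; but this does not affect the argument.
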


Since the essential spectrum of $\wA$ itself is the reciprocal set of
the nonzero essential spectrum of $\wA^{-1}$, we also have:

\begin{corollary}\label{Corollary4.2} When $\wA$ is as in Theorem {\rm 4.1},
\begin{equation}
\sigma _{\operatorname{ess}}\wA=\sigma
_{\operatorname{ess}}A_0\cup\sigma _{\operatorname{ess}}T. \label{tag4.10}
\end{equation}
In particular, $\sigma _{\operatorname{ess}}\wA$ contains all points
of  $\sigma _{\operatorname{ess}}T$, and the points in 
$\sigma _{\operatorname{ess}}A_0$ cannot be removed from $\sigma _{\operatorname{ess}}\wA$.

\end{corollary}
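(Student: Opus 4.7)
The plan is to deduce Corollary \ref{Corollary4.2} from Theorem \ref{Theorem4.1} by applying the reciprocal spectral mapping $\mu\mapsto 1/\mu$ to the identity \eqref{tag4.9}. First I would recall the elementary principle: if $B$ is a (possibly unbounded) selfadjoint operator with $0\in\varrho(B)$, then for every nonzero $\mu$ one has $\mu\in\sigma_{\operatorname{ess}}(B^{-1})$ if and only if $1/\mu\in\sigma_{\operatorname{ess}}(B)$. This is immediate from the functional calculus (or from transporting Weyl singular sequences via $B^{-1}$), and applies to all three of $\wA$, $A_0$, and $T$ under the hypotheses.

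Next I would address the point $0$ separately. Since $\wA$ and $A_0$ are unbounded selfadjoint, their bounded inverses $\wA^{-1}$ and $A_0^{-1}$ both have $0\in\sigma_{\operatorname{ess}}$; the point $0$ on the left-hand side of \eqref{tag4.9} corresponds to no finite point in $\sigma_{\operatorname{ess}}\wA$. The point $0$ may or may not sit in $\sigma_{\operatorname{ess}}T^{-1}$ (depending on whether $T$ is bounded, as in Remark \ref{Remark2.2}, or unbounded), but in either case removing $0$ from both sides of \eqref{tag4.9} yields
\begin{equation*}
\sigma_{\operatorname{ess}}\wA^{-1}\setminus\{0\}=(\sigma_{\operatorname{ess}}T^{-1}\setminus\{0\})\cup(\sigma_{\operatorname{ess}}A_0^{-1}\setminus\{0\}).
\end{equation*}
Applying $\mu\mapsto 1/\mu$ termwise, using the reciprocal principle recalled above for each of the three operators, produces
\begin{equation*}
\sigma_{\operatorname{ess}}\wA=\sigma_{\operatorname{ess}}T\cup\sigma_{\operatorname{ess}}A_0,
\end{equation*}
which is \eqref{tag4.10}. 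The ``in particular'' assertion is then read off directly: since the union is inclusive, both $\sigma_{\operatorname{ess}}A_0\subset\sigma_{\operatorname{ess}}\wA$ and $\sigma_{\operatorname{ess}}T\subset\sigma_{\operatorname{ess}}\wA$ hold for every admissible choice of $C$ (equivalently of $T$, via Proposition \ref{Proposition2.1} and \eqref{tag2.17}), so no perturbation of the boundary condition of the type \eqref{tag1.5} can remove a point of $\sigma_{\operatorname{ess}}A_0$ from $\sigma_{\operatorname{ess}}\wA$.

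There is no real obstacle here; the proof is a one-line reciprocal-spectrum argument atop the substantive content of Theorem \ref{Theorem4.1}. The only point requiring any care is the bookkeeping around $0$, which must be excised before inverting because $0$ is always in $\sigma_{\operatorname{ess}}\wA^{-1}$ and $\sigma_{\operatorname{ess}}A_0^{-1}$ for trivial reasons (unboundedness of $\wA$ and $A_0$) but plays no role in the spectra of $\wA$, $A_0$, or $T$ themselves.
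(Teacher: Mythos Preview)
Your proposal is correct and follows exactly the paper's approach: the paper's entire proof of the corollary is the single sentence ``Since the essential spectrum of $\wA$ itself is the reciprocal set of the nonzero essential spectrum of $\wA^{-1}$, we also have:'' preceding the statement. Your write-up simply makes explicit the bookkeeping around $0$ that the paper leaves implicit.
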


The statements in the introduction follow: In case 1) we take $T$
as in Remark \ref{Remark2.2} of Section \ref{Description} in order to
add a point $\{a\}$; when $T$
acts like $aI$, $C$ acts
like $a\Lambda _{(-1)}+P_{\gamma _0,\nu _A}$. A general
choice of a selfadjoint invertible $T_0$ in a separable
infinite-dimensional Hilbert space $Z_0$
gives rise to a selfadjoint invertible $T$ in the Hilbert space $Z$
with the same essential spectrum, by use of a unitary operator from 
$Z_0$ to $Z$. The statement
in 2) follows since we have covered all possibilities for $T$ in the
case of Neumann-type boundary conditions.

\section{Higher order cases}\label{Higher}

Similar results can be shown for higher order elliptic operators. 
The selfadjoint strongly elliptic even-order case is the natural
generalization of the case considered in the preceding sections; here
there is a solvable Dirichlet problem, and a selfadjoint invertible realization defined by
another boundary condition can be related to the Dirichlet realization
by a Kre\u\i{}n-type formula generalizing \eqref{tag2.18} 
as in \cite{G68,G74,BGW09}.

Invertible realizations exist in greater generality, though, so to
save later repetitions, we consider to begin with a 
more general
class assuring existence of resolvents $(\wA-\lambda )^{-1}$ at least when 
$\lambda $ is large, lying in a suitable subset of ${\mathbb C}$. 
We take for $A$ an elliptic operator $A=\sum_{|\alpha |\le
2m}a_\alpha (x)D^\alpha $ of order $2m$, $m$ integer,
with complex $C^\infty $ coefficients on ${\mathbb R}^n$ that are bounded with
bounded derivatives and with the principal symbol $a^0(x,\xi )=
\sum_{|\alpha |=
2m}a_\alpha \xi ^\alpha $ satisfying (with $c_1>0$) 
\begin{equation}
\operatorname{Re}a^0(x,\xi )\ge c_1|\xi |^{2m}, \text{ for }x,\xi \in{\mathbb
R}^n;\label{tag5.1}
\end{equation}
uniform strong ellipticity.  Here $D^\alpha =D_1^{\alpha _1}\cdots
D_n^{\alpha _n}$, $D_j=-i\partial/\partial x_j$. Denote by $A_0$ the maximal realization
of $A$ in $L_2({\mathbb R}^n)$; the uniform ellipticity implies that
$D(A_0)=H^{2m}({\mathbb R}^n)$. 

$A$ satisfies a G\aa{}rding inequality, for
which we include a quick proof:

\begin{lemma}\label{Lemma5.1} There are constants $c_0>0$ and $k\in{\mathbb
R}$ such that
\begin{equation}
\operatorname{Re}(Au,u)\ge c_0\|u\|_{m}^2-k\|u\|_{0}^2,\text{ for all
}u\in 
H^{2m}({\mathbb R}^n).\label{tag5.2}
\end{equation}

\end{lemma}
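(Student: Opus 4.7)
The plan is to split $A$ into its principal part and a lower-order remainder, establish the inequality for the principal part by freezing coefficients on a fine partition of unity, and control the lower-order remainder and commutator errors by interpolation.

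First I would write $A = A^0 + A'$, where $A^0 = \sum_{|\alpha|=2m} a_\alpha(x) D^\alpha$ is the principal part and $A'$ has order at most $2m-1$. Since the $a_\alpha$ for $|\alpha| \le 2m-1$ are bounded with bounded derivatives, integration by parts gives a bound of the form $|(A'u,u)| \le C\|u\|_{m-1}\|u\|_m$ for $u \in H^{2m}(\mathbb{R}^n)$ (distribute up to $m$ derivatives to each factor). The standard interpolation inequality $\|u\|_{m-1}^2 \le \varepsilon \|u\|_m^2 + C_\varepsilon \|u\|_0^2$ then yields $|(A'u,u)| \le \varepsilon \|u\|_m^2 + C_\varepsilon \|u\|_0^2$ for any $\varepsilon>0$.

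For the principal part I would use uniform continuity of the $a_\alpha$ with $|\alpha|=2m$ (guaranteed by boundedness of their derivatives) to fix $\delta>0$ so small that $|a_\alpha(x)-a_\alpha(y)|\le \eta$ whenever $|x-y|\le\delta$, for a small $\eta$ to be chosen. Take a partition of unity $\{\varphi_j^2\}_{j\in\mathbb{N}}$ of $\mathbb{R}^n$ with $\operatorname{supp}\varphi_j\subset B(x_j,\delta)$, uniform $C^\infty$ bounds on $\varphi_j$, and finite overlap. Using $\sum_j \varphi_j^2 = 1$,
\begin{equation*}
(A^0 u, u) = \sum_j (A^0 \varphi_j u, \varphi_j u) - \sum_j ([A^0, \varphi_j] u, \varphi_j u).
\end{equation*}
On each patch, replace $A^0$ by the constant-coefficient operator $P_j = \sum_{|\alpha|=2m} a_\alpha(x_j) D^\alpha$; the difference $A^0 - P_j$ has coefficients of modulus $\le \eta$ on $\operatorname{supp}\varphi_j$, so contributes at most $C\eta \|\varphi_j u\|_m^2$. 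For $P_j$ itself, Plancherel gives
\begin{equation*}
\operatorname{Re}(P_j v, v) = \int \operatorname{Re} a^0(x_j,\xi) |\hat v(\xi)|^2\,d\xi \ge c_1 \int |\xi|^{2m}|\hat v(\xi)|^2\,d\xi \ge c_1 \|v\|_m^2 - c_1 C \|v\|_0^2,
\end{equation*}
using that $|\xi|^{2m} \ge \langle\xi\rangle^{2m} - C$ pointwise. Summing over $j$ and exploiting the finite overlap of the $\{\varphi_j\}$, I obtain $\operatorname{Re}\sum_j (A^0 \varphi_j u, \varphi_j u) \ge (c_1 - C\eta)\|u\|_m^2 - k_1\|u\|_0^2$.

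The commutator terms $[A^0,\varphi_j]$ are differential operators of order $2m-1$ whose coefficients are supported in $\operatorname{supp}\nabla\varphi_j$ and enjoy uniform $C^\infty$ bounds independent of $j$. Distributing derivatives as for $A'$ gives $|([A^0,\varphi_j]u, \varphi_j u)| \le C \|\tilde\varphi_j u\|_{m-1}\|\varphi_j u\|_m$ with $\tilde\varphi_j$ a slight enlargement; summing with Cauchy--Schwarz and finite overlap yields $|\sum_j ([A^0,\varphi_j]u,\varphi_j u)| \le C\|u\|_{m-1}\|u\|_m \le \varepsilon\|u\|_m^2 + C_\varepsilon \|u\|_0^2$. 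Choosing $\eta$, $\varepsilon$ sufficiently small compared to $c_1$ and combining with the lower-order estimate for $A'$ gives \eqref{tag5.2} with $c_0 = c_1/2$ (say) and some $k$. The main obstacle is the commutator bookkeeping: one must verify that the uniform $C^\infty$ bounds on the $\varphi_j$ and the finite-overlap property ensure that the constants arising from the partition of unity are independent of the index $j$, so that the sum converges and yields constants depending only on the symbol bounds of $A$ on all of $\mathbb{R}^n$.
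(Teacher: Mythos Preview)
Your argument is the classical coefficient-freezing proof of G\aa rding's inequality and is essentially correct, but it is a genuinely different route from the paper's. The paper works within the global pseudodifferential calculus: it conjugates $A$ by $\Lambda^{-m}=\Op(\langle\xi\rangle^{-m})$ to obtain a zero-order operator $A_1=\Lambda^{-m}A\Lambda^{-m}$, writes $\operatorname{Re}A_1=P^*P+B$ with $P$ having principal symbol $(\operatorname{Re}a_1^0)^{1/2}$ and $B$ of order $-1$, and then uses a parametrix for the elliptic $P$ to bound $\|v\|_0$ by $\|Pv\|_0$ plus a lower-order term; substituting $v=\Lambda^m u$ gives \eqref{tag5.2}. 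Your approach is more elementary in that it avoids symbolic calculus entirely, relying only on Plancherel for constant-coefficient operators and partition-of-unity bookkeeping; the paper's approach is shorter once the calculus is available and carries over unchanged to pseudodifferential $A$.

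Two small slips to clean up. First, the pointwise bound $|\xi|^{2m}\ge\langle\xi\rangle^{2m}-C$ is false for $m\ge 2$, since $\langle\xi\rangle^{2m}-|\xi|^{2m}\sim m|\xi|^{2m-2}$; the correct statement is $\int|\xi|^{2m}|\hat v|^2\,d\xi\ge\|v\|_m^2-C\|v\|_{m-1}^2$, which after interpolation still yields $c_1\|v\|_m^2-k_1\|v\|_0^2$. Second, when you integrate by parts in $((A^0-P_j)\varphi_j u,\varphi_j u)$ to put $m$ derivatives on each side, some derivatives land on $a_\alpha(x)-a_\alpha(x_j)$; those coefficients are bounded but not small, producing an extra term of size $C\|\varphi_j u\|_m\|\varphi_j u\|_{m-1}$. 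This is harmless --- it is absorbed by exactly the same interpolation you use for $A'$ and for the commutators --- but should be mentioned alongside the $C\eta\|\varphi_j u\|_m^2$ contribution.
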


\begin{proof} Using the calculus of globally estimated
pseudodifferential operators as in \cite{H85} Sect.\ 18.1, and
\cite{G96}, we can write 
\begin{equation*}
A_1=\Lambda ^{-m}A\Lambda ^{-m},\quad \operatorname{Re}A_1=\tfrac12(A_1+A_1^*)=P^*P+B,
\end{equation*}
 where $\Lambda ^s=\Op(\ang\xi ^s)$, $A_1$ is of order 0 with
principal symbol $a_1^0(x,\xi )$ satisfying 
\begin{equation*}
\operatorname{Re}a_1^0(x,\xi )\ge c'_1>0,
\end{equation*}
$P$ is of order 0 with principal symbol
$p^0=(\operatorname{Re}a^0_1)^{\frac12}$, and $B$ is of order
$-1$. Since $P$ is elliptic, it has a parametrix $Q$ of order 0 so
that $I-QP$ is of order $-1$; hence 
\begin{align*}
\|v\|_0^2&=\|QPv+(I-QP)v\|_0^2\le C\|Pv\|_0^2+C'\|v\|_{-1}^2\\
&=C(P^*Pv,v)+C'\|v\|_{-1}^2\le C\operatorname{Re}(A_1v,v)+C''\|v\|_{-\frac12}^2,
\end{align*}
for $v\in {\cal S}({\mathbb R}^n)$ (the Schwartz space of rapidly
decreasing functions, dense in any $H^s({\mathbb R}^n)$).
It follows that (with $\|u\|_s=\|\Lambda ^su\|_{L_2({\Bbb R}^n)}$) 
\begin{align*}
\operatorname{Re}(Au,u)&=\operatorname{Re}(A_1\Lambda ^{m}u,\Lambda ^m
u)\\
&\ge C^{-1}\|u\|^2_m-C^{-1}C''\|u\|_{m-\frac12}^2
\ge \tfrac12 C^{-1}\|u\|_m^2-k\|u\|_0^2,
\end{align*}
where we used that $\|u\|^2_{m-\frac12}\le \varepsilon \|u\|_m^2+C(\varepsilon
)\|u\|_0^2$, any $\varepsilon >0$.
  \end{proof}

Since $|(Au,u)|\le C_1\|u\|_m^2$ and $\|u\|_m\ge \|u\|_0$, we can
infer from \eqref{tag5.2} that 
\begin{align*}
|\operatorname{Im}(Au,u)|&\le |(Au,u)|\le C_1\|u\|_m^2\le
C_1c_0^{-1}(\operatorname{Re}(Au,u)+k\|u\|_0^2)\\
\operatorname{Re}(Au,u)&\ge c_0\|u\|_0^2-k\|u\|_0^2=(c_0-k)\|u\|_0^2
;
\end{align*}
hence the numerical range of $A_0$, $\nu (A_0)=\{(A_0u,u)/\|u\|_0^2\mid u\in
D(A_0)\setminus \{0\}\}$,  
 is contained in a sectorial region $V$, 
\begin{equation}
\nu (A_0)\subset V\equiv
\{\lambda \in{\mathbb C} \mid \operatorname{Re} \lambda \ge c_0-k,\, |\operatorname{Im}\lambda |\le
c_2 (\operatorname{Re}\lambda +k)\},\label{tag5.3}
\end{equation} with $c_2=C_1c_0^{-1}$.
The numerical
range of the adjoint $A_0^*$ is likewise contained in $V$, and $V$ contains the
spectrum of $A_0$. (The elementary functional analysis used here is
explained e.g.\ in \cite{G09}, Ch.\ 12.)
 
For simplicity we add $kI$ to $A$, so that we can use the information
with $k=0$ in the following, replacing $V$ by 
\begin{equation}
V_0=
\{\lambda \in{\mathbb C}
\mid \operatorname{Re }\lambda \ge c_0,\, |\operatorname{Im}\lambda |\le
c_2 \operatorname{Re}\lambda\}.\label{tag5.4}
\end{equation}

The Dirichlet trace $\gamma u$ is in the $2m$-order case defined by \begin{equation*}
\gamma u=\{\gamma
_0u,\dots, \gamma  _{m-1}u\},
\end{equation*} with $\gamma _ju=\gamma
_0(\sum\nu _kD_k)^ju$. For the Dirichlet problems on smooth exterior or interior
subsets of ${\mathbb R}^n$, the variational construction gives a
realization with numerical range and spectrum likewise contained 
in $V_0$. Moreover, there are
Sobolev space mapping properties of the solution operator; this is extremely
well-known 
for bounded domains, and for exterior domains it is covered e.g.\ by
Cor.\ 3.3.3 of \cite{G96} (the differential operator $A-\lambda $ is uniformly
parameter-elliptic on all rays $\{\lambda =re^{i\theta }\mid r\ge 0\}$ outside 
$V_0$, and 
parameter-ellipticity of the boundary problem holds uniformly for $x'$
in the boundary).

Let us specify the result for $\Omega _+$ and $\Sigma $ defined
as in Section \ref{Introduction}. We denote 
$A_0^{-1}=Q$. Then
\begin{equation}
{\cal A}_\gamma =\begin{pmatrix} A\\\gamma \end{pmatrix} : 
H^{s+2m}(\Omega _+) \to
\begin{matrix}
H^{s}(\Omega _+)\\ \times \\ {\prod} _{0\le j<m}H^{s+2m-j-\frac12}(\Sigma  )\end{matrix}
\label{tag5.5}
\end{equation}
has for $s>-m-\frac12$ the
solution operator, continuous in the opposite direction,
\begin{equation}
{\cal A}_\gamma^{-1}=\begin{pmatrix} R_\gamma& \; K_\gamma\end{pmatrix},\text{ with }
R_\gamma=Q_+-K_\gamma\gamma Q_+.
 \label{tag5.6}
\end{equation}
Here $R_\gamma $ is the inverse of the Dirichlet realization $A_\gamma
$, which acts
like $A$ with domain $D(A_\gamma )=H^{2m}(\Omega _+)\cap
H^m_0(\Omega _+)$.

The general theory of \cite{G68,BGW09} is here interpreted by use of 
the Poisson operator $K_\gamma:{\prod} _{0\le j<m}H^{-j-\frac12}(\Sigma  )\to L_2(\Omega
_+)$ (and variants with $\lambda $-dependence). $K_\gamma $ acts as an inverse of 
\begin{equation}
\gamma _Z:Z\simto {\prod} _{0\le j<m}H^{-j-\frac12}(\Sigma  ),\label{tag5.7}
\end{equation}
$Z$ denoting the $L_2(\Omega _+)$ nullspace of $A$. The formulas are
exactly the same as in \cite{BGW09} Section 3.3.

We shall compare $A_\gamma $ with the realization $A_{B\varrho }$ of a
general {\it normal} boundary condition, defined as in \cite{G71}, \cite{BGW09} (3.85). Let 
\begin{equation*}M=\{0,1,\dots,2m-1\},\text{ denoting  }\varrho u =\{\gamma _ju\}_{j\in
M};\end{equation*} 
the Cauchy data. Let
$J$ be a subset of $M$ with $m$ elements, and let $B$ be a $J\times
M$-matrix of differential operators $B_{jk}$ on $\Sigma $ of order $j-k$:
\begin{equation}
B=(B_{jk})_{j\in J,k\in M} \text{ with }B_{jk}=0 \text{ for }k>j,\, B_{jj}=I. 
\label{tag5.8}\end{equation}
The boundary condition \cite{BGW09} (3.85): $\gamma
_ju+\sum_{k<j}B_{jk}\gamma _ku=0$ for $j\in J$,  can then be written
\begin{equation*}
B\varrho u=0;
\end{equation*}
it defines the realization $A_{B\varrho }$ with domain 
\begin{equation*}
D(A_{B\varrho })=\{u\in D(\Ama)\mid B\varrho u=0\}.
\end{equation*}
 Special examples are the
cases where $J=M_0$ or $M_1$,
\begin{equation*}
M_0=\{0,1,\dots, m-1\}, \quad M_1=\{m,m+1,\dots, 2m-1\},\text{
denoting }\nu u=\{\gamma _ju \}_{j\in M_1};
\end{equation*}
they define Dirichlet-type resp.\ Neumann-type conditions.

Let us assume that 
$\{A-\lambda , B\varrho \}$ is uniformly parameter-elliptic for 
$\lambda $ on a ray outside
$V_0$; then for large $\lambda $ on the ray, $\{A-\lambda , B\varrho
\}$ is invertible. Take a $\lambda _0$ where this invertibility holds
(assuming invertibility of $A_0-\lambda _0$ and $A_\gamma -\lambda _0$
also), and
denote in the rest of this section $A-\lambda _0$ by $A$;
then we are in the situation where
\begin{equation}
{\cal A}_{B\varrho } =\begin{pmatrix} A\\ B\varrho  \end{pmatrix} : 
H^{s+2m}(\Omega _+) \to 
\begin{matrix}
H^{s}(\Omega _+)\\ \times \\ {\prod} _{j\in J}H^{s+2m-j-\frac12}(\Sigma  )\end{matrix}
\label{tag5.9}
\end{equation}
for $s>-\frac12$ has the
solution operator, continuous in the opposite direction,
\begin{equation}
{\cal A}_{B\varrho }^{-1}=\begin{pmatrix} R_{B\varrho }& \;K_{B\varrho }\end{pmatrix},\text{ with }
R_{B\varrho }=Q_+-K_{B\varrho }B\varrho  Q_+.
 \label{tag5.10}
\end{equation}
Here $R_{B\varrho }$ is the inverse of the realization $A_{B\varrho }
$, which acts like $A$ with
domain $D(A_{B\varrho } )=\{u\in H^{2m}(\Omega _+)\mid B\varrho u=0\}.$

The difference between $A_\gamma ^{-1}$ and $A_{B\varrho }^{-1}$, and
more generally between two solution operators $A_{B\varrho }^{-1}$ and
$A_{\tilde B\varrho }^{-1}$, can
be described spectrally very much like in Section \ref{Cutoff}. First there is a
generalization of Proposition \ref{Proposition3.1} and its corollaries. 
Define $\Omega _\gtrless$,
$r^\gtrless$ and $e^\gtrless$ as in Section \ref{Cutoff}.

\begin{proposition}\label{Proposition5.2} 

$1^\circ$ The operators $K_{B\varrho ,>}=r^>K
_{B\varrho }:{\prod}_{j\in J}H^{-j-\frac12}(\Sigma
)\to L_2(\Omega _>)$ and $(K_{B\varrho ,>})^*=K_{B\varrho } ^*e^>:
L_2(\Omega _>)\to {\prod} _{j\in J}H^{j+\frac12}(\Sigma
)$  map continuously
\begin{align}
r^>K _{B\varrho }&:  {\prod}_{j\in J}H^{s-j-\frac12}(\Sigma )\to
H^{s'}(\Omega _>), \text{ any }s,s'\in{\mathbb R},\nonumber\\
K_{B\varrho } ^*e^>&:  (H^{s'}(\Omega _>))^*\to
{\prod} _{j\in J}H^{-s+j+\frac12}(\Sigma ), \text{ any }s',s\in{\mathbb R},
\label{tag5.11}
\end{align}
and are spectrally negligible.

$2^\circ$ When $\eta $ is a function in
$C_0^\infty ({\mathbb R}^n,{\mathbb R})$ that is $1$ on a neighborhood of
$\comega_>$, the operators $(1-\eta )K
_{B\varrho }:{\prod}_{j\in J}H^{-j-\frac12}(\Sigma
)\to L_2(\Omega _+)$ and $K_{B\varrho } ^*(1- \eta ):
L_2(\Omega _+)\to {\prod} _{j\in J}H^{j+\frac12}(\Sigma
)$  map continuously
\begin{align}
(1-\eta )K _{B\varrho }&:  {\prod}_{j\in J}H^{s-j-\frac12}(\Sigma )\to
H^{s'}(\Omega _+), \text{ any }s,s'\in{\mathbb R},\nonumber\\
K_{B\varrho } ^*(1-\eta )&:  (H^{s'}(\Omega _+))^*\to
{\prod} _{j\in J}H^{-s+j+\frac12}(\Sigma ), \text{ any }s',s\in{\mathbb R},
\label{tag5.12}
\end{align}
and are spectrally negligible.

\end{proposition}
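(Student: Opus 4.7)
The proof will parallel that of Proposition 3.1 and Corollary 3.2 very closely, with the product Cauchy-data spaces $\prod_{j\in J}H^{\cdot}(\Sigma)$ replacing the single trace space in the second-order case. The core fact I need is that $K_{B\varrho}$ is a Poisson operator for $\{A,B\varrho\}$ on $\Omega_+$, and hence is smoothing away from the boundary $\Sigma$; this is a standard property of Poisson operators in the pseudodifferential boundary value calculus of \cite{G96}, which is where the main technical input lies.

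For part $1^\circ$, given $\varphi\in\prod_{j\in J}H^{s-j-\frac12}(\Sigma)$, set $u=K_{B\varrho}\varphi$. Then $Au=0$ on $\Omega_+$ and $B\varrho u=\varphi$. Since $\overline{\Omega}_-\subset\complement\overline{\Omega}_>$, the set $\overline{\Omega}_>$ is separated from $\Sigma$, so interior elliptic regularity for $A$ gives $u\in C^\infty$ in a neighborhood of $\overline{\Omega}_>$. In particular the Dirichlet trace $\gamma^> u$ on $\partial\Omega_>$ is smooth, and the variational/regularity theory for the Dirichlet problem on $\Omega_>$ (cf.\ Cor.\ 3.3.3 of \cite{G96}, which covers exterior subsets as admissible manifolds) yields $r^>u\in H^{s'}(\Omega_>)$ for every $s'$. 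This gives the first line of \eqref{tag5.11}; the second line follows by taking adjoints, identifying $(H^{s'}(\Omega_>))^*$ with a suitable space of distributions supported in $\overline{\Omega}_>$. Composing, $K_{B\varrho}^*e^>r^>K_{B\varrho}$ maps $\prod_{j\in J}H^{s-j-\frac12}(\Sigma)$ into $\prod_{j\in J}H^{s''+j+\frac12}(\Sigma)$ for \emph{all} $s,s''\in\mathbb R$. Since $\Sigma$ is compact, this composition lies in $\bigcap_{r>0}\mathcal C_r$, hence is spectrally negligible. Using $s_l(K_{B\varrho,>}^*K_{B\varrho,>})=s_l(K_{B\varrho,>})^2$, we deduce that $K_{B\varrho,>}$, and by the identity $s_l(T)=s_l(T^*)$ also $(K_{B\varrho,>})^*=K_{B\varrho}^*e^>$, are spectrally negligible.

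For part $2^\circ$, write $(1-\eta)K_{B\varrho}=e^>(1-\eta)r^>K_{B\varrho}=e^>(1-\eta)K_{B\varrho,>}$, exactly as in the proof of Corollary 3.2. Multiplication by $1-\eta$ is bounded on every $H^{s'}(\Omega_>)$, and extension by zero is bounded from $H^{s'}(\Omega_>)$ into $H^{s'}(\Omega_+)$ for the relevant range of $s'$, so the mapping property \eqref{tag5.12} and spectral negligibility of $(1-\eta)K_{B\varrho}$ both follow from part $1^\circ$ by composition with bounded operators. The statement for the adjoint $K_{B\varrho}^*(1-\eta)$ is obtained by taking adjoints.

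The main obstacle is the rigorous justification of the mapping property $r^>K_{B\varrho}\colon\prod_{j\in J}H^{s-j-\frac12}(\Sigma)\to H^{s'}(\Omega_>)$ for all $s,s'\in\mathbb R$, valid on the entire Sobolev scale down to arbitrarily negative boundary regularity. In the second-order case this was clear by classical regularity for the Dirichlet problem, but here one must rely on the parameter-dependent pseudodifferential boundary operator calculus (as alluded to in Remark 3.4 and developed in \cite{G96}) to confirm that $K_{B\varrho}$ is a genuine Poisson operator in that calculus whose kernel is $C^\infty$ away from $\Sigma$. Once this is granted, the rest of the argument is routine and mirrors the second-order case step by step.
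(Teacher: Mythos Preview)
Your proof is correct and follows essentially the same route as the paper's: interior regularity of the null-solution $K_{B\varrho}\varphi$ gives a smooth Dirichlet trace on $\partial\Omega_>$, then solvability of the Dirichlet problem on $\Omega_>$ yields arbitrary $H^{s'}$ regularity, and the spectral negligibility is deduced exactly as in Proposition~\ref{Proposition3.1} and Corollary~\ref{Corollary3.2}. Your closing paragraph is a bit overcautious: the full parameter-dependent calculus is not actually needed here (that is the alternative route noted in Remark~\ref{Remark5.3}); interior elliptic regularity for $A$ plus the Dirichlet solution operator on $\Omega_>$ suffice, just as you already argued in the body of your proof.
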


\begin{proof} Denote by $\gamma ^>$ the Dirichlet trace operator for
$2m$-order operators on $\Omega _>$.
When $\varphi \in
{\prod}_{j\in J}H^{-j-\frac12}(\Sigma )$, $K_{B\varrho }\varphi $ is
$C^\infty $ on $\Omega _+$, hence $\gamma
^>K_{B\varrho } \varphi \in C^\infty (\partial \Omega _>)$. 
Then $r^>K_{B\varrho } \varphi $ is a null-solution
of the
Dirichlet problem for $A$ on $\Omega _>$ with $C^\infty $ boundary value.
This
will also hold if $\varphi \in H^{s-\frac12}(\Sigma )$, any $s\in{\mathbb
R}$. We now use that the Dirichlet problem on
$\Omega _>$ has a solution operator with mapping properties similar to
the problem for $\Omega _+$, and the proof is completed in the same way as
the proofs of Proposition \ref{Proposition3.1} and Corollary
\ref{Corollary3.2}. \end{proof}

\begin{remark}\label{Remark5.3} It may be observed as in Remark 
  \ref{Remark3.4} of Section \ref{Cutoff} that the
proofs can also be inferred from \cite{G96}, Lemma 2.4.8, and this
moreover implies a rapid decrease in $\lambda $ of any Schatten norm.
\end{remark}

With Proposition \ref{Proposition5.2} it is easy to generalize Theorem \ref{Theorem3.5} as follows:

\begin{theorem}\label{Theorem5.4} For $B\varrho $ and $\tilde B\varrho $ as above,
defining invertible elliptic realizations, let
\begin{equation}
P=A_0^{-1}-A_{B\varrho }^{-1}\oplus 0_{L_2(\Omega _-)},\quad
G'=A_0^{-1}-A_{B\varrho }^{-1}\oplus (A_0^{-1})_-,\quad
G''=A_{B\varrho }^{-1}-A_{\tilde B\varrho }^{-1}.\label{tag5.13}
\end{equation}
Then 
\begin{equation}
P\in T_{2m/n},\quad G'\text{ and }G''\in T_{2m/(n-1)}.\label{tag5.14}
\end{equation} 
Moreover, there are spectral asymptotics formulas for $l\to\infty $:
\begin{equation}
s_l(P)l^{2m/n}\to C,\quad s_l(G'')l^{2m/(n-1)}\to C'';\label{tag5.15}
\end{equation}
where the constants are determined from the principal symbols. Here
$C$ is the same constant as for $(A_0^{-1})_-$, namely $C=\lim_{l\to\infty }s_l((A_0^{-1})_-)l^{2m/n}$. 
\end{theorem}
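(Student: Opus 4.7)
The plan is to mimic the proof of Theorem \ref{Theorem3.5} in the $2m$-order setting, using formula \eqref{tag5.10} in place of \eqref{tag2.7}\,ff.\ and invoking Proposition \ref{Proposition5.2} wherever Proposition \ref{Proposition3.1} and Corollary \ref{Corollary3.2} were used there. Throughout, I fix cutoffs $\eta,\eta'\in C_0^\infty({\mathbb R}^n,{\mathbb R})$ supported in a common smooth bounded set $\Omega'$, with $\eta=1$ in a neighborhood of $\Sigma$ (so that $B\varrho\cdot(1-\eta)Q_+=0$, since $(1-\eta)Q_+u$ vanishes near $\Sigma$) and $\eta'=1$ on a neighborhood of $\supp\eta$.

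For $G''$, I start from the identity $-G''=K_{B\varrho}B\varrho Q_+-K_{\tilde B\varrho}\tilde B\varrho Q_+$ from \eqref{tag5.10}, and expand each summand as in \eqref{tag3.9}:
\[K_{B\varrho}B\varrho Q_+ = K_{B\varrho}B\varrho\eta Q_+ = \eta'K_{B\varrho}B\varrho\eta Q_+\eta' + \eta'K_{B\varrho}B\varrho\eta Q_+(1-\eta') + (1-\eta')K_{B\varrho}B\varrho\eta Q_+.\]
The last two terms are spectrally negligible: the middle one because $\eta Q(1-\eta')$ acts between disjoint supports within the bounded $\Omega'$ and is thus of order $-\infty$ with bounded range, the third because $(1-\eta')K_{B\varrho}$ is spectrally negligible by Proposition \ref{Proposition5.2}($2^\circ$). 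Subtracting the analogous expansion for $\tilde B\varrho$ leaves a compactly supported singular Green operator of order $-2m$ on $\Omega'\cap\Omega_+$, and \cite{G84} Theorem 4.10 then yields $G''\in T_{2m/(n-1)}$ together with the asymptotics $s_l(G'')l^{2m/(n-1)}\to C''$.

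For $G'$, I decompose as in the proof of Theorem \ref{Theorem3.5},
\[G' = \tilde G + K_{B\varrho}B\varrho Q_+\oplus 0,\qquad \tilde G = e^+r^+Qe^-r^- + e^-r^-Qe^+r^+,\]
and square: $\tilde G^2 = L_{\Omega_+}(Q,Q)\oplus L_{\Omega_-}(Q,Q)$, a sum of leftover singular Green operators of order $-4m$. The bounded-domain summand lies in $T_{4m/(n-1)}$ with Weyl asymptotics directly by \cite{G84} Theorem 4.10; for $L_{\Omega_+}(Q,Q)$ I insert $\eta,\eta'$ as in \eqref{tag3.11} to reduce it, modulo spectrally negligible terms, to $L_{\Omega_+}(\eta'Q\eta,\eta Q\eta')$, a singular Green operator compactly supported in $\Omega'\cap\Omega_+$, again in $T_{4m/(n-1)}$ by the same theorem. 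The $s$-number square-root rule from \cite{GK69} then gives $\tilde G\in T_{2m/(n-1)}$ with the matching Weyl asymptotics, and combined with the contribution of $K_{B\varrho}B\varrho Q_+\oplus 0$ handled above this yields $G'\in T_{2m/(n-1)}$. Finally $P = G' + 0_{L_2(\Omega_+)}\oplus Q_-$; since $Q_-$ is the truncation of the inverse of the $2m$-order elliptic operator $A_0$ to the bounded $n$-dimensional domain $\Omega_-$, it satisfies the classical Weyl asymptotics with exponent $2m/n$ and limit $C=\lim s_l((A_0^{-1})_-)l^{2m/n}$ determined by the principal symbol of $A_0$ on $\Omega_-$. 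Because $2m/n<2m/(n-1)$, the $Q_-$ summand dominates $G'$ in $P$, and the $s$-number perturbation rules of \cite{GK69} (as used in \cite{G84}) yield $s_l(P)l^{2m/n}\to C$.

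The main obstacle is the spectral analysis of the unbounded-domain leftover operator $L_{\Omega_+}(Q,Q)$, to which the bounded-domain asymptotics of \cite{G84} Theorem 4.10 do not apply directly. This is exactly where the rough-cutoff spectrally-negligible machinery of Section \ref{Cutoff}---extended to the $2m$-order case in Proposition \ref{Proposition5.2}---is essential, as it lets one discard all contributions at positive distance from $\Sigma$ and reduce to a compactly supported singular Green operator on $\Omega'\cap\Omega_+$ where the standard $\psi$do-boundary-operator calculus of \cite{G84} gives the full spectral asymptotics.
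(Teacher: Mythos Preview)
Your proof is correct and follows the paper's own argument essentially verbatim: the paper's proof of Theorem \ref{Theorem5.4} simply says to repeat the proof of Theorem \ref{Theorem3.5} with the order-$2$ ingredients replaced by their $2m$-order analogues (using \eqref{tag5.10} and Proposition \ref{Proposition5.2}), and that is exactly what you have done. One small wording imprecision: in the $G''$ step, $(1-\eta')$ is not supported in $\Omega'$; what makes $\eta Q(1-\eta')$ spectrally negligible is that $\supp\eta\cap\supp(1-\eta')=\emptyset$ (so the operator is of order $-\infty$) and its range lies in the bounded set $\supp\eta\subset\Omega'$.
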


\begin{proof} We proceed as in the proof of Theorem \ref{Theorem3.5}. First,
\begin{equation*}
G''=-K_{B\varrho }B\varrho  Q_++K_{\tilde B\varrho }\tilde B\varrho  Q_+
\end{equation*}
is written as a singular Green operator on $\Omega '\cap \Omega _+$
plus a spectrally negligible term, by a version of \eqref{tag3.9} applied to
both terms. The assertions for $G''$ then follow from \cite{G84}
Th.\ 4.10. Next, $G'$ is treated similarly to $G'_j$ in Theorem
\ref{Theorem3.5}, noting that the operators of order 2 have been replaced by
operators of order $2m$. Finally, $P= G'+0_{L_2(\Omega _+)}\oplus Q_-$,
where the spectral asymptotics behavior of $Q_-$ dominates the sum, in
view of the rules for $s$-numbers. 
\end{proof}

It is here allowed to take the set $J$ for $B\varrho $ different from the
corresponding set $\tilde J$ for $\tilde B\varrho $. There are similar
results for differences between higher powers $A_{B\varrho
}^{-N}-A_{\tilde B\varrho }^{-N}$, as in Remark \ref{Remark3.6} of
Section \ref{Cutoff}.

A result of the type  $A_{B\varrho }^{-N}-A_{\tilde B\varrho }^{-N}\in
T_{2mN/(n-1)}$ has been announced by Gesztesy and Malamud in \cite{GM08}, apparently based on a
consideration of $M$-functions.

In all the calculations, $A$ can be taken to be a $(p\times
p)$-system, acting on $p$-vectors. When $A$ is scalar, the 
boundary conditions with \eqref{tag5.8} are the most general ones for which
parameter-ellipticity can hold (cf.\ \cite{G96}, Sect.\ 1.5); in the 
systems case there exist more
general normal boundary conditions, as studied in \cite{G74}. The above
analysis can be extended to include these, mainly at the cost of a more
complicated notational apparatus.  Pseudodifferential $B_{jk}$ could be
allowed as in \cite{G96}.

\begin{remark} \label{Remark5.2}For bounded domains, the result for $G''$ has been known
  since 1984, since $A_{B\varrho }^{-1}-A_{\tilde B\varrho }^{-1}$ is
  then itself a singular Green operator of order $-2m$ on a bounded
  domain, to which \cite{G84} Th.\ 4.10 applies. For selfadjoint
  cases, see also \cite{G74} Sect.\ 8.
\end{remark}

\section {Spectral perturbations in higher-order cases}\label{Spectral}

For the study of perturbations of essential spectra we restrict the
attention to selfadjoint realizations. First of all, this requires
that $A$ equal its formal adjoint $A'$, moreover, it restricts the
sets $J$ and matrices $B$ that can be allowed. With the notation
$N'=\{k\mid 2m-k-1\in N\}$, we have as a necessary
condition on $J$ is that it should equal its reversed complement:
\begin{equation}
J=K',\text{ where }K=M\setminus J.\label{tag6.1}
\end{equation}
To explain further, we recall some details from \cite{G71}. From the Green's formula 
\begin{equation*}
(Au,v)-(u,Av)=({\cal A}\varrho
u,\varrho v)=\Big(\begin{pmatrix} {\cal A}_{M_0M_0}&{\cal
A}_{M_0M_1}\\{\cal A}_{M_1M_0}&0\end{pmatrix} \begin{pmatrix} \gamma u\\\nu
u\end{pmatrix}, \begin{pmatrix} \gamma u\\\nu u\end{pmatrix}\Bigr),
\end{equation*}
where ${\cal A}$ is skew-selfadjoint and invertible, it is seen that when we set 
\begin{equation}
\chi u={\cal A}_{M_0M_1}\nu u+\tfrac12 {\cal A}_{M_0M_0}\gamma u,\label{tag6.2}
\end{equation}
(taking $\frac12$ of the contribution from ${\cal A}_{M_0M_0}$ along), we get the symmetric formula
\begin{equation}
(Au,v)_{L_2(\Omega _+)}-(u,Av)_{L_2(\Omega _+)}=(\chi u,\gamma
v)_{L_2(\Sigma )^m} -(\gamma  u,\chi  v)_{L_2(\Sigma )^m},
\label{tag6.3}
\end{equation}
valid for $u,v\in H^{2m}(\Omega _+)$. Here $\chi $ is indexed by
$M_0$, $\chi =\{\chi _j\}_{j\in M_0}$ with
$\chi _j$ of order $2m-j-1$; it replaces $\nu $ in systematic
considerations and maps from $H^s(\Omega _+)$ to ${\prod}_{j\in
  M_0}H^{s-2m+j+\frac12}(\Sigma )$.  Green's formula
 has the extension to $u\in D(\Ama)$, $v\in H^{2m}(\Omega _+)$:
\begin{equation*}
(Au,v)_{L_2(\Omega _+)}-(u,Av)_{L_2(\Omega _+)}=(\chi  u,\gamma
v)_{\{-2m+j+\frac12\},\{2m-j-\frac12\}} -(\gamma  u,\chi 
v)_{\{-j-\frac12\},\{j+\frac12\}},
\end{equation*}
where $(\cdot,\cdot)_{\{-s_j\},\{s_j\}}$ denotes the duality between
${\prod} H^{-s_j}(\Sigma )$ and ${\prod} H^{s_j}(\Sigma )$.
With $\chi $ replaced by the ``reduced Neumann trace operator'' $\Gamma $,
one has for $u,v\in D(\Ama)$:
\begin{equation}
(Au,v)_{L_2(\Omega _+)}-(u,Av)_{L_2(\Omega _+)}=(\Gamma  u,\gamma
v)_{\{j+\frac12\},\{-j-\frac12\}} -(\gamma  u,\Gamma
v)_{\{-j-\frac12\},\{j+\frac12\}};
\label{tag6.4}
\end{equation}
here 
\begin{equation}
P_{\gamma ,\chi }=\chi K_\gamma ,\quad \Gamma =\chi -P_{\gamma ,\chi }\gamma =\chi A_\gamma ^{-1}\Ama.\label{tag6.5}
\end{equation}

Now when $J$ satisfies \eqref{tag6.1}, the subsets
\begin{equation*}
J_0=J\cap M_0, \quad J_1=J\cap M_1,\quad K_0=K\cap M_0,\quad K_1=K\cap M_1,
\end{equation*}
satisfy
\begin{equation}
{K_1}'=J_0,\quad {J_1}'=K_0.\label{tag6.6}
\end{equation}
We set 
\begin{equation*}
\gamma _{J_0}=\{\gamma _j\}_{j\in J_0},\quad \gamma _{K_0}=\{\gamma
_j\}_{j\in K_0},\quad \chi _{J_0}=\{\chi _j\}_{j\in J_0},\quad \chi _{K_0}=\{\chi _j\}_{j\in K_0}.
\end{equation*}
As shown in \cite{G71} and recalled in \cite{BGW09}, the boundary
condition $B\varrho u=0$ may then be rewritten in the form, with
differential operators $F_0$, $G_1$, $G_2$,
\begin{equation}
\gamma _{J_0}u=F_0\gamma _{K_0}u,\quad \chi _{K_0}u=G_1\gamma _{K_0}u+G_2\chi _{J_0}u,\label{tag6.7} 
\end{equation}
when we take \eqref{tag6.6} into account. Here the first condition
$\gamma _{J_0}u=F_0\gamma _{K_0}u$ can be viewed as the ``Dirichlet
part'', purely concerned with $\gamma u$, whereas the second condition
$\chi _{K_0}u=G_1\gamma _{K_0}u+G_2\chi _{J_0}u$ can be viewed as the
``Neumann-type part'', where part of the Neumann data $\chi _{K_0}u$
is given as a function of the other data. Note that $G_1$ links
the free Dirichlet data $\gamma _{K_0}u$ to Neumann data and has
entries of positive order, and $G_2$ has entries of order $<m$.

The boundary condition for the
adjoint realization is then
\begin{equation}
\gamma _{J_0}u=-G_2^*\gamma _{K_0}u,\quad \chi _{K_0}u=G_1^*\gamma
_{K_0}u-F_0^*\chi _{J_0}u. \label{tag6.8}
\end{equation}

If $J=M_0$, the condition $B\varrho u=0$ reduces to the Dirichlet
condition $\gamma u=0$. To get a different condition we must take
$J\ne M_0$; this means that $K_0\ne \emptyset$. 

We assume in the following that $\{A-\lambda , B\varrho \}$ is
uniformly parameter-elliptic on a ray outside $V_0$ as in the preceding section,
so that $D(A_{B\varrho })\subset H^{2m}(\Omega _+)$. Then 
\begin{equation}
G_2^*=-F_0,\quad G_1^*=G_1,\label{tag6.9}
\end{equation}
is necessary and sufficient for selfadjointness of $A_{B\varrho }$.
\eqref{tag6.9} is assumed from now on.

The operator $A_{B\varrho }$ corresponds to a selfadjoint
 operator $T:V\to V$ by
the general theory, where $V$ is the $L_2(\Omega _+)$-closure of
$\pr_\gamma D(A_{B\varrho })$ (here $\pr_\gamma =I-A_\gamma
^{-1}\Ama$). $V$ is mapped by $\gamma $ onto
the closure $X$ of $\gamma D(A_{B\varrho })$ in ${\prod} _{k\in
M_0}H^{-k-\frac12}(\Sigma ) $. Here $X$ is the graph of $F_0$, so it is homeomorphic to ${\prod} _{k\in
K_0}H^{-k-\frac12}(\Sigma ) $, by the mappings
\begin{align}
\Phi =\begin{pmatrix} I_{K_0K_0}\\ F_0\end{pmatrix},& \quad \pr_1=\begin{pmatrix}
I&\; 0\end{pmatrix},\label{tag6.10}\\
 \Phi: {\prod}_{k\in K_0
}H^{-k-\frac12}(\Sigma )\simto X,&\quad \pr_1 :X\simto {\prod}_{k\in K_0
}H^{-k-\frac12}(\Sigma ),
\end{align}
as shown in \cite{G71} and recalled in \cite{BGW09}. Here $V=K_\gamma X=K_\gamma \Phi {\prod}_{k\in K_0
}H^{-k-\frac12}(\Sigma )$.

The restriction
of $\gamma $ to a mapping from
$V$ to $X$ is denoted $\gamma _V$, so we have:
\begin{equation*}
\gamma _V:V\simto X,\; \pr_1\gamma _V:V\simto {\prod}_{k\in K_0
}H^{-k-\frac12}(\Sigma ),\; \gamma _V^{-1}\Phi :{\prod}_{k\in K_0
}H^{-k-\frac12}(\Sigma )\simto V.
\end{equation*}
With these definitions, \eqref{tag6.7} may be written (using \eqref{tag6.9})
\begin{equation}
\gamma u=\Phi \gamma _{K_0}u,\quad \Phi ^* \chi u= G_1\gamma
_{K_0}u.\label{tag6.11} 
\end{equation}

The operator $T$
in $V$ is carried over to an operator 
\begin{equation}
L=(\gamma _V^*)^{-1}T\gamma _V^{-1}:X\to X^*,\label{tag6.12}
\end{equation} 
which is further
translated to an operator 
\begin{equation}
L_1=\Phi ^*L\Phi :{\prod}_{k\in K_0
}H^{-k-\frac12}(\Sigma )\to {\prod}_{k\in K_0
}H^{k+\frac12}(\Sigma ).\label{tag6.13}
\end{equation}
We now recall from \cite{G71} how the form of $L_1$ is determined (this
detail was not repeated in \cite{BGW09}). Consider  the condition defining the
correspondence between $A_{B\varrho }$ and $T$ (cf.\ \cite{G68,G71,BGW09}):
\begin{equation}
(Au,z)=(T\pr_\zeta u,z)\text{ for all }u\in D(A_{B\varrho }), \; z\in V.\label{tag6.14}
\end{equation}
Here the right-hand side is rewritten as 
\begin{equation*}
(T\pr_\zeta u,z)=(L\gamma u,\gamma z)_{\{k+\frac12\},\{-k-\frac12\}}=(L_1 \gamma _{K_0}u,\gamma _{K_0}z)_{\{k+\frac12\},\{-k-\frac12\}},
\end{equation*}
whereas the left-hand side takes the form, in view of \eqref{tag6.4} and \eqref{tag6.11}:
\begin{align*}
(Au,z)&=(\Gamma u,\gamma z)=(\chi u-P_{\gamma ,\chi }\gamma u, \Phi
\gamma _{K_0}z)_{\{k+\frac12\},\{-k-\frac12\}} \\
&=(\Phi ^*\chi u-\Phi ^*P_{\gamma ,\chi }\Phi \gamma
_{K_0}u,\gamma _{K_0}z)_{\{k+\frac12\},\{-k-\frac12\}}\\
&= ((G_1 -\Phi ^*P_{\gamma ,\chi }\Phi )\gamma _{K_0}u,\gamma _{K_0}z)_{\{k+\frac12\},\{-k-\frac12\}}.
\end{align*}
Since $\gamma _{K_0}z$ runs in a dense subset of ${\prod}_{k\in K_0
}H^{-k-\frac12}(\Sigma )$, \eqref{tag6.14} implies
\begin{equation}
L_1\gamma _{K_0}u=(G_1-\Phi ^*P_{\gamma ,\chi }\Phi )\gamma _{K_0}u,\label{tag6.15}
\end{equation}
so $L_1$ acts like $G_1-\Phi ^*P_{\gamma ,\chi }\Phi$. The boundary
condition may then be rewritten as  
\begin{equation}
\gamma u=\Phi \gamma _{K_0}u,\quad \Phi ^* \chi u= (L_1+\Phi ^*P_{\gamma ,\chi }\Phi )\gamma _{K_0}u.\label{tag6.16}
\end{equation}
Since $\{A,B\varrho \}$ is elliptic, 
$L_1$ is an elliptic selfadjoint mixed-order pseudodifferential
operator;  its domain is  $D(L_1)={\prod}_{k\in K_0
}H^{2m-k-\frac12}(\Sigma )$.

When $A_{B\varrho }$ is invertible, so are $T$, $L$ and $L_1$, and
\cite{G68} Th.\ II.1.4 implies
\begin{equation}
A_{B\varrho }^{-1}=A_\gamma ^{-1}+\inj_VT^{-1}\pr_V=A_\gamma ^{-1}+K_\gamma \Phi L_1^{-1}\Phi ^*K_\gamma ^*.\label{tag6.17}
\end{equation}
(It is used here that $\inj_V\gamma _V^{-1}\Phi =K_\gamma \Phi $.)

All this is just the implementation of the known results to operators
defined for the unbounded set $\Omega _+$. But now we are in a position to consider interesting
perturbations. 

We replace $T:V\to V$ for $A_{B\varrho }$ by an operator 
$\widetilde T:V\to V$, selfadjoint invertible  with a nonempty
essential spectrum, and want to see how
this effects the realization. As above, $\widetilde T$ carries over to 
\begin{equation}
\widetilde L_1=\Phi ^*(\gamma _V^*)^{-1}\widetilde T\gamma _V^{-1}\Phi :{\prod}_{k\in K_0
}H^{-k-\frac12}(\Sigma )\to {\prod}_{k\in K_0
}H^{k+\frac12}(\Sigma ),\label{tag6.18}
\end{equation}
with $D(\widetilde L_1)=\pr_1\gamma  D(\widetilde T)$, and the
boundary condition now takes the form
\begin{align}
\gamma u=\Phi \gamma _{K_0}u,&\quad \Phi ^* \chi u= \widetilde
G_1\gamma _{K_0}u,\quad \gamma _{K_0}u\in D(\widetilde L_1),\label{tag6.19}\\
 \text{ where
}
\widetilde G_1=& \widetilde
L_1+\Phi ^*P_{\gamma ,\chi }\Phi =G_1+\widetilde L_1-L_1.
\nonumber
\end{align}
Here
\begin{equation}
\wA^{-1}=A_\gamma ^{-1}+\inj_V\widetilde T^{-1}\pr_V=A_\gamma
^{-1}+K_\gamma \Phi \widetilde L_1^{-1}\Phi ^*K_\gamma ^*.\label{tag6.20}
\end{equation} 

\begin{theorem}\label{Theorem6.1} Consider the realization $A_{B\varrho }$
of $A$ in $L_2(\Omega _+)$ defined by a normal boundary condition
$B\varrho u=0$ (cf.\ {\rm \eqref{tag5.8}}) with $J\ne M_0$, and assume that
ellipticity and selfadjointness holds, cf.\ {\rm \eqref{tag6.7}--\eqref{tag6.9}}. 
$A_{B\varrho }$ corresponds to an operator $T:V\to V$, where
$V=K_\gamma X=K_\gamma \Phi {\prod}_{k\in K_0
}H^{-k-\frac12}(\Sigma )$, cf.\ also {\rm \eqref{tag6.12}, \eqref{tag6.13}, \eqref{tag6.15}}. 

Let $\widetilde T$ 
be a selfadjoint invertible operator in
$V$ with nonempty essential spectrum, and let $\widetilde A$ be the
realization of $A$ corresponding to $\widetilde T:V\to V$, i.e., where the boundary condition {\rm \eqref{tag6.7}},
equivalently written {\rm \eqref{tag6.11}}, is replaced by {\rm \eqref{tag6.19}}.
Then 
\begin{equation}
\sigma _{\operatorname{ess}}\wA=\sigma
_{\operatorname{ess}}A_0\cup\sigma _{\operatorname{ess}}\widetilde
T. \label{tag6.21} 
\end{equation}
\end{theorem}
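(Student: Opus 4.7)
The plan is to follow the same template as the proof of Theorem \ref{Theorem4.1} and Corollary \ref{Corollary4.2} in Section \ref{Perturbations}, but built on the Kre\u\i{}n-type formula \eqref{tag6.20} for $\wA^{-1}$ and the higher-order cutoff results from Proposition \ref{Proposition5.2}. Concretely, I work with $\wA^{-1}$ rather than $\wA$, use the exterior/interior decomposition of $L_2(\Omega _+)=L_2(\Omega _<)\oplus L_2(\Omega _>)$ determined by the sets $\Omega _\gtrless$ of Section \ref{Cutoff}, and pass to reciprocals at the end (noting $0\in\sigma _{\operatorname{ess}}\wA^{-1}$ and $0\in\sigma _{\operatorname{ess}}A_0^{-1}$ since $\wA$ and $A_0$ are unbounded).

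First I split $A_\gamma ^{-1}$ into a block-matrix form as in \eqref{tag4.2}. Because $A_\gamma ^{-1}$ maps $L_2(\Omega _+)$ into $H^{2m}(\Omega _+)$, the mixed blocks $r^<A_\gamma ^{-1}e^<$, $r^<A_\gamma ^{-1}e^>$ and $r^>A_\gamma ^{-1}e^<$ all map into $H^{2m}(\Omega _<)$, which is compactly embedded into $L_2(\Omega _<)$. Hence modulo a compact perturbation $S_1$,
\begin{equation*}
A_\gamma ^{-1}= 0_{L_2(\Omega _<)}\oplus (r^>A_\gamma ^{-1}e^>)+S_1,
\end{equation*}
and by Theorem \ref{Theorem5.4} the essential spectrum of $A_\gamma ^{-1}$ equals $\sigma _{\operatorname{ess}}A_0^{-1}$. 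Next, I decompose the second term $K_\gamma \Phi \widetilde L_1^{-1}\Phi ^*K_\gamma ^*$ in the same way. Every nonzero entry in the off-diagonal pieces and in the $(>,>)$-block is a composition involving either $r^>K_\gamma $ or $K_\gamma ^*e^>$ (with bounded factors $\Phi $, $\Phi ^*$, $\widetilde L_1^{-1}$ in between). Proposition \ref{Proposition5.2}, applied to $K_\gamma $ in place of $K_{B\varrho }$ (the proof there only uses that $K_\gamma $ is a Poisson operator for the Dirichlet problem, giving the same hypoellipticity outside $\Sigma $), shows that these pieces are spectrally negligible. Thus, modulo a spectrally negligible $S_2$,
\begin{equation*}
K_\gamma \Phi \widetilde L_1^{-1}\Phi ^*K_\gamma ^*= (r^<K_\gamma \Phi \widetilde L_1^{-1}\Phi ^*K_\gamma ^*e^<)\oplus 0_{L_2(\Omega _>)}+S_2.
\end{equation*}

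Adding the two decompositions and using that $K_\gamma \Phi \widetilde L_1^{-1}\Phi ^*K_\gamma ^*=\inj_V\widetilde T^{-1}\pr_V$, which has the same essential spectrum as $\widetilde T^{-1}$ up to $\{0\}$ (since $V$ is infinite-dimensional), I obtain an analogue of \eqref{tag4.7}:
\begin{equation*}
\wA^{-1}=(r^<K_\gamma \Phi \widetilde L_1^{-1}\Phi ^*K_\gamma ^*e^<)\oplus (r^>A_\gamma ^{-1}e^>)+S,
\end{equation*}
with $S=S_1+S_2$ compact. The two summands act on orthogonal subspaces, so their joint essential spectrum is the union of their individual essential spectra; the compact $S$ does not change $\sigma _{\operatorname{ess}}$. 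This yields
\begin{equation*}
\sigma _{\operatorname{ess}}\wA^{-1}=\sigma _{\operatorname{ess}}\widetilde T^{-1}\cup \sigma _{\operatorname{ess}}A_0^{-1},
\end{equation*}
and taking reciprocals (mapping the nonzero essential spectrum of $\wA^{-1}$ to $\sigma _{\operatorname{ess}}\wA$) gives \eqref{tag6.21}.

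The main technical point is the applicability of Proposition \ref{Proposition5.2} to the Poisson operator $K_\gamma $ composed with the boundary-order-compensating factors $\Phi $ and $\Phi ^*$. Since $\Phi $ is just a bounded inclusion/extension between the product Sobolev spaces and $\widetilde L_1^{-1}$ is bounded on the appropriate scale, the spectral negligibility of $r^>K_\gamma $ and $K_\gamma ^*e^>$ automatically transfers to every composition that has $r^>$ or $e^>$ sandwiched with a Poisson factor, which is exactly what is needed. No new difficulty arises from the absence of a priori smoothness of $D(\widetilde T)$, because the decomposition of $\wA^{-1}$ already encodes $\widetilde T^{-1}$ abstractly via the Kre\u\i{}n formula \eqref{tag6.20}.
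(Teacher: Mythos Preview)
Your proof is correct and follows essentially the same route as the paper's own argument, which likewise reduces to the template of Theorem~\ref{Theorem4.1} and Corollary~\ref{Corollary4.2} via the Kre\u\i{}n formula \eqref{tag6.20} and Proposition~\ref{Proposition5.2} applied with $B\varrho=\gamma$, arriving at the decomposition \eqref{tag6.22}. One small slip: the block $r^>A_\gamma^{-1}e^<$ lands in $H^{2m}(\Omega_>)$, not $H^{2m}(\Omega_<)$, so its compactness should be argued (as in the paper's treatment around \eqref{tag4.3}) from the fact that it is the $L_2$-adjoint of the compact operator $r^<A_\gamma^{-1}e^>$, using that $A_\gamma$ is selfadjoint.
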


\begin{proof} The proof goes in exactly the same way as the proof of
Theorem \ref{Theorem4.1} and Corollary \ref{Corollary4.2}. We cut up $\Omega _+$ in a bounded part
$\Omega _<$ and an exterior part $\Omega _>$, and use \eqref{tag6.20} and
Proposition \ref{Proposition5.2}
with $B\varrho =\gamma $ to see that $\widetilde A$ can be written as 
\begin{equation}
\wA^{-1}=(r^<K_\gamma \Phi  \widetilde L_1^{-1}\Phi ^*K_\gamma  ^*e^<)
\oplus (r^>A_{\gamma }^{-1}e^>)+S,\label{tag6.22}
\end{equation}
where $S$ is compact, the operator $r^>A_{\gamma }^{-1}e^>$ in $L_2(\Omega
_>)$ has the same essential
spectrum as $A_\gamma ^{-1}$, and the operator $r^<K_\gamma \Phi  \widetilde
L_1^{-1}\Phi ^*K_\gamma  ^*e^<$ in $L_2(\Omega _<)$ has the same essential 
spectrum as 
$K_\gamma \Phi  \widetilde L_1^{-1}\Phi ^*K_\gamma  ^*=\inj_V\widetilde
T^{-1}\pr_V$ outside 0. 
\end{proof}

Briefly expressed, the theorem states that any normal boundary
condition (apart from the Dirichlet condition) defining a selfadjoint
invertible 
elliptic realization, can be perturbed by addition of a suitable
operator to $G_1$ (the map from the free Dirichlet data to Neumann data)
to provide a selfadjoint invertible realization with a prescribed augmentation of
the essential spectrum.

\begin{example}\label{Example6.2}
Let $A=\Delta ^2+1$. Clearly, $A$ satisifes the positivity and
selfadjointness requirements, and it has the Green's formula \eqref{tag6.3}
with \begin{equation*}
\gamma =\{\gamma _0,\gamma _1\},\quad \chi =\{\chi _0,\chi _1\} =\{-\gamma  _1\Delta ,\gamma_0\Delta \} ,
\end{equation*}
as in \cite{BGW09} Example 3.14. The Dirichlet operator
\begin{equation}
{\cal A}_\gamma =\begin{pmatrix} \Delta ^2+1\\\gamma \end{pmatrix} : 
H^{s+4}(\Omega _+) \to
\begin{matrix}
H^{s}(\Omega _+)\\ \times \\ H^{s+\frac72}(\Sigma  )\times H^{s+\frac52}(\Sigma  )
\end{matrix},
\label{tag6.23}
\end{equation}
where $s>-\frac52$,
has an inverse $
\begin{pmatrix} R_\gamma &\; K_\gamma \end{pmatrix}$ continuous in the opposite
direction. Let us take (as in \cite{BGW09}, Ex.\ 3.14)
$J=\{0,2\}\subset M=\{0,1,2,3\}$; it satisifes \eqref{tag6.1}, and $J_0=\{0\}$,
$K_0=\{1\}$. With this choice, the boundary condition \eqref{tag6.7} is of the form
\begin{equation}
\gamma _0u=0,\quad      \gamma _0\Delta u=G_1\gamma _1u. \label{tag6.24}
\end{equation}
($F_0$ and $G_2$ vanish, being differential operators of
negative order.) $G_1$ is of order 1. Selfadjointness
of $A_{B\varrho }$ requires $G_1^*=G_1$, and if this holds and the
problem is elliptic, then $A_{B\varrho }$ is selfadjoint with domain
$D(A_{B\varrho })=\{u\in H^4(\Omega _+)\mid \text{\eqref{tag6.24} holds.}\}$
Continuing under this assumption, we find that
\begin{equation*}
X=\{0\}\times H^{-\frac32}(\Sigma ),\text{ naturally identified with }H^{-\frac32}(\Sigma ),
\end{equation*}
 and $L_1$ is the first-order pseudodifferential operator
\begin{equation}
L_1=G_1 -\pr_2 P_{\gamma ,\chi }\inj_2:H^{-\frac32}(\Sigma )\to
H^{\frac32}(\Sigma ),\label{tag6.25} 
\end{equation}
with $D(L_1)=H^{\frac52}(\Sigma )$ in view of the ellipticity. There
is a corresponding operator $T:V\to V$ where $V=K_\gamma (\{0\}\times
H^{-\frac32}(\Sigma )) $. Invertibility holds e.g.\ when $L_1$ has a
positive lower bound.

Replacing $T:V\to V$ by $\widetilde T:V\to V$, selfadjoint and
invertible with a nonempty essential
spectrum, corresponds to replacing $G_1$ by 
\begin{equation}
\widetilde G_1=G_1+\widetilde L_1-L_1, \quad
\widetilde L_1=\pr_2(\gamma _V^*)^{-1}\widetilde T\gamma
_V^{-1}\inj_2.\label{tag6.26 }
\end{equation}
The corresponding realization $\wA$ is defined by the boundary
condition \begin{equation}
\gamma _0u=0,\quad      \gamma _0\Delta u=\widetilde G_1\gamma _1u,
\quad \gamma _1u\in D(\widetilde L_1),
\label{tag6.27} 
\end{equation}
and
satisfies $\sigma _{\operatorname{ess}}\wA=\sigma
_{\operatorname{ess}}A_0\cup\sigma _{\operatorname{ess}}\widetilde T$.

\end{example}

\end{document}